\definecolor{skyblue}{rgb}{0.85,0.85,1}
\newtheorem{theorem}{Theorem}
\newtheorem{lemma}[theorem]{Lemma}
\newtheorem{cor}[theorem]{Corollary}
\theoremstyle{definition}
\newtheorem{rem}[theorem]{Remark}
\newtheorem{define}[theorem]{Definition}
\DeclareMathOperator{\dv}{div}
\DeclareMathOperator{\Hess}{Hess}
\DeclareMathOperator{\dom}{dom}
\DeclareMathOperator{\id}{id}
\DeclareMathOperator{\sgn}{sgn}
\newcommand{\bbR}{\mathbb{R}}
\newcommand{\cD}{\mathcal{D}}
\newcommand{\cE}{\mathcal{E}}
\newcommand{\cP}{\mathcal{P}}
\newcommand{\cU}{\mathcal{U}}
\newcommand{\pO}{{\partial \Omega}}
\newcommand{\p}{\partial}
\newcommand{\nor}{\p_{_\Sigma}}
\newcommand{\cEp}{\cE_{\scriptscriptstyle P}^s}
\newcommand{\cPp}{\cP_{\!\scriptscriptstyle P}^s}
\newcommand{\Tp}{T_{\scriptscriptstyle P}}
\renewcommand{\S}{S_{\scriptscriptstyle P}}
\newcommand{\Dtn}{\Lambda_{\scriptscriptstyle P}}
\newcommand{\DtnNu}{\Lambda_{\scriptscriptstyle P,\nu}}
\newcommand{\Fnu}{\mathscr{F}_{\scriptscriptstyle P,\nu}}
\newcommand{\Snu}{S_{\scriptscriptstyle P,\nu}}
\newcommand{\cH}{\mathcal{H}_{\scriptscriptstyle P,\nu}}
\begin{document}

\title[Stability of spectral partitions and the Dirichlet-to-Neumann map]
{Stability of spectral partitions and the Dirichlet-to-Neumann map}

\author{G. Berkolaiko}
\address{Department of
  Mathematics, Texas A\&M University, College Station, TX 77843-3368, USA}
\email{berko@math.tamu.edu}

\author{Y. Canzani}
\address{Department of Mathematics, University of North Carolina at Chapel Hill,
Phillips Hall, Chapel Hill, NC  27599, USA}
\email{canzani@email.unc.edu}

\author{G. Cox}
\address{Department of Mathematics and Statistics, Memorial University of Newfoundland, St. John's, NL A1C 5S7, Canada}
\email{gcox@mun.ca}

\author{J.L. Marzuola}
\address{Department of Mathematics, University of North Carolina at Chapel Hill,
Phillips Hall, Chapel Hill, NC  27599, USA}
\email{marzuola@math.unc.edu}

\begin{abstract}
The oscillation of a Laplacian eigenfunction gives a great deal of information about the manifold on which it is defined. This oscillation can be encoded in the \emph{nodal deficiency}, an important geometric quantity that is notoriously hard to compute, or even estimate. Here we compare two recently obtained formulas for the nodal deficiency, one in terms of an energy functional on the space of equipartitions of the manifold, and the other in terms of a two-sided Dirichlet-to-Neumann map defined on the nodal set. We relate these two approaches by giving an explicit formula for the Hessian of the equipartition energy in terms of the Dirichlet-to-Neumann map. This allows us to compute Hessian eigenfunctions, and hence directions of steepest descent, for the equipartition energy in terms of the corresponding Dirichlet-to-Neumann eigenfunctions. Our results do not assume bipartiteness, and hence are relevant to the study of spectral minimal partitions.
%
\end{abstract}

\maketitle

\section{Introduction}

Let $(M^n,g)$ be a compact Riemannian manifold, and denote the eigenvalues of the Laplace--Beltrami operator by $\lambda_1 < \lambda_2 \leq \cdots$, with corresponding eigenfunctions $\psi_1, \psi_2, \ldots$. For any eigenfunction $\psi_*$, Courant's nodal domain theorem says that its number of nodal domains, denoted $\nu(\psi_*)$, is bounded above by the minimal label of its eigenvalue, which is defined as $\ell(\psi_*) := \min\{k : \lambda_k = \lambda_* \}$. That is, any eigenfunction corresponding to the $k$-th eigenvalue has at most $k$ nodal domains.

Equivalently, the \underline{nodal deficiency}
\begin{equation}
	\delta(\psi_*) := \ell(\psi_*) - \nu(\psi_*)
\end{equation}
is nonnegative.
Despite almost a century of intensive study, this quantity is still not very well understood. Much attention has been paid to the so-called \underline{Courant sharp} eigenfunctions---those for which $\delta(\psi_*) = 0$. It is well known that there are only finitely many of these on any given domain \cite{P56}.
There are many examples of domains where one can exhaustively list the Courant sharp eigenfunctions; see, for instance, \cite{berard2016courant,berard2020courant,helffer2010spectral,helffer2016nodal,lena2015courant}, as well as the survey \cite{bonnaillie2015nodal} and references therein. However, these examples are all highly symmetric, and their analysis relies on explicit computation of the eigenfunctions and eigenvalues via separation of variables.

The first general formula for nodal deficiency on manifolds appeared in \cite{BKS12}, inspired by similar results for quantum graphs \cite{BBRS12}; see also \cite{BRS12} for the discrete graph setting. To describe this result, we require some definitions, which will be elaborated on in Section~\ref{sec:results}. We say that a $k$-partition $P = \{\Omega_j\}_{j=1}^k$ of $M$ is an \underline{equipartition} if
\begin{equation}
\label{lambda}
	\lambda_1(\Omega_1) = \cdots = \lambda_1(\Omega_k),
\end{equation}
where $\lambda_1(\Omega_j)$ denotes the first eigenvalue of the Dirichlet Laplacian on $\Omega_j$. For an equipartition $P$ we define $\lambda(P)$ to be the common value in \eqref{lambda}. The set of equipartitions near a given smooth (i.e. $C^\infty$) equipartition can be given the structure of a Hilbert manifold, on which $P \mapsto \lambda(P)$ is a smooth function. The nodal domains of a Laplacian eigenfunction are easily seen to form a bipartite equipartition; see Definition~\ref{def:bi}.  Conversely, it was shown in \cite{BKS12} that a smooth, bipartite equipartition $P$ is the nodal partition of an eigenfunction $\psi_*$ if and only if $P$ is
a critical point of $\lambda$.  Moreover, if the corresponding eigenvalue is simple\footnote{It is an immediate consequence of our main result that \eqref{nodal:BKS} also holds for non-simple eigenvalues. In this case the Hessian is degenerate, with nullity determined by the multiplicity of the eigenvalue, as in \eqref{nullmult}.},
then the Hessian of $\lambda$ at $P$ is non-degenerate, and its Morse index equals the nodal deficiency,
\begin{equation}
\label{nodal:BKS}
	n_-\big(\Hess \lambda(P)\big) = \delta(\psi_*).
\end{equation}
Here we recall that the \underline{Morse index} of a symmetric bilinear form, denoted $n_-$, is the maximal dimension of a subspace on which the form is negative definite, and the \underline{nullity}, $n_0$, is the dimension of the nullspace of the form. If the bilinear form corresponds to a self-adjoint operator, then the Morse index and nullity equal the number of negative and zero eigenvalues, respectively, counted with multiplicity.

It follows from \eqref{nodal:BKS} that smooth nodal partitions of Courant sharp eigenfunctions correspond to local minima of the equipartition energy. On the other hand, an earlier result in \cite{helffer2009nodal} showed that bipartite (globally) minimal partitions are precisely the nodal partitions of Courant sharp eigenfunctions, under a mild regularity assumption on the partition boundary. 
Combining this with \eqref{nodal:BKS}, we have that
\[
    \bigg\{\begin{tabular}{@{}c@{}}
        \text{smooth, bipartite} \\
        \text{local minima of $\lambda$}
    \end{tabular}\bigg\}
    \quad\Longleftrightarrow\quad
    \bigg\{\begin{tabular}{@{}c@{}}
        \text{smooth nodal partitions of } \\
        \text{Courant sharp eigenfunctions}
    \end{tabular}\bigg\}
    \quad\Longleftrightarrow\quad
    \bigg\{\begin{tabular}{@{}c@{}}
        \text{smooth, bipartite} \\
        \text{global minima of $\lambda$}
    \end{tabular}\bigg\}.
\]
That is, every smooth, bipartite local minimum of $\lambda$ is in fact a global minimum. A similar phenomenon was recently observed in the dispersion relations of periodic graphs \cite{BCCM}.

The second explicit formula for the nodal deficiency appeared in \cite{CJM2}; see also \cite{BCM19,helffer2021spectral}. To facilitate our comparison with \eqref{nodal:BKS}, we will state the result in a stronger form, which is due to \cite{BCHS}. 
Let  $P=\{\Omega_j\}$ be the nodal partition of an eigenfunction $\psi_*$, with energy $\lambda(P) = \lambda_*$.
We first introduce the two-sided Dirichlet-to-Neumann map $\Dtn$ associated to the eigenvalue $\lambda_*$. This is an unbounded, self-adjoint operator, with domain dense in
\begin{equation}
\label{def:S}
	\S := \left\{ f \in L^2(\Sigma) : \int_{\pO_j} f \frac{\p \psi_j}{\p \nu_j} = 0 \text{ for all } j \right\},
\end{equation}
where $\psi_j$ denotes the ground state for the Dirichlet Laplacian on $\Omega_j$,  $\nu_j$ is the outward unit normal, and $\Sigma:=\cup_j\partial\Omega_j$ is the nodal set. A precise definition will be given in Section \ref{sec:DtN}; for now we just mention that
\begin{equation}
\label{LambdaS}
	\Dtn f = \Pi_{\S}\big(\nor u\big)
\end{equation}
for sufficiently smooth $f \in  \S$, where $\nor u$ is a function on $\Sigma$ given by $\nor u\big|_{\pO_i \cap \pO_j} : = \frac{\p u_i}{\p \nu_i} + \frac{\p u_j}{\p \nu_j}$ for $i \neq j$,
 $u_j$ is any solution to the boundary value problem
$\Delta u_j + \lambda_* u_j = 0$ in  $\Omega_j$ with $u_j\big|_{\pO_j} = f,$
and $\Pi_{\S}$ is the $L^2(\Sigma)$-orthogonal projection onto $\S$. Since $\nu_i = -\nu_j$ on $\pO_i \cap \pO_j$, the function $\nor u$  measures the mismatch in normal derivatives across the nodal set $\Sigma$.

The result from \cite{BCHS} can now be stated as follows: If $P=\{\Omega_j\}$ is the nodal partition of an eigenfunction $\psi_*$, with energy $\lambda(P) = \lambda_*$, then
\begin{align}
\label{nodal:CJM}
\begin{split}
	n_-(\Dtn) = \delta(\psi_*), \qquad
	n_0(\Dtn) = n_0 (\Delta + \lambda_*) -1.
\end{split}
\end{align}

Comparing the formulas \eqref{nodal:BKS} and \eqref{nodal:CJM} for the nodal deficiency, we see that
\begin{align}
\label{nodal:equal}
\begin{split}
	n_-\big(\Hess \lambda(P)\big) &= n_-(\Dtn). \\
\end{split}
\end{align}
The goal of this paper is to explain \emph{why} this equality holds. We achieve this by giving an explicit relationship between $\Hess \lambda(P)$ and $\Dtn$. Namely, in Theorem \ref{thm:Hess1}, we prove that the bilinear form $\Hess \lambda(P)$ generates a self-adjoint operator that is unitarily equivalent to $\Dtn$. We only consider smooth\footnote{The non-smooth case is more involved and will be treated elsewhere; see the discussion at the end of Section~\ref{sec:results}.} equipartitions, but we do not require them to be associated to eigenfunctions that have simple eigenvalues. In particular, our results imply that \eqref{nodal:BKS} remains valid for the nodal partition of an eigenfunction with non-simple eigenvalue, and also give the equality
\begin{equation}
\label{nullmult}
	n_0\big(\Hess \lambda(P)\big) =  n_0 (\Delta + \lambda_*) -1
\end{equation}
for the nullity of the Hessian. 

Moreover, our results also apply to non-bipartite partitions, with a suitable modification of $\Dtn$. (Note that smooth, non-bipartite partitions can only exist on multiply connected domains, such as the torus.) This gives a powerful new tool in the study of spectral minimal partitions, since our analysis provides explicit formulas relating the eigenfunctions of the Dirichlet-to-Neumann map to the directions of steepest descent for the function $\lambda$. We illustrate this point with an example in Section \ref{sec:example}, where we see a compelling geometric connection between the eigenfunctions of $\Dtn$ and the conjectured minimal 3-partition of the square.

\section{Statement of results}
\label{sec:results}
To illustrate the relationship between the Hessian and the Dirichlet-to-Neumann map with minimal technicalities, we assume that $\p M = \varnothing$, and only deal with generic partitions, as defined below. The case of non-generic partitions will be treated in a future work.

\begin{define}
\label{def:generic}
$P = \{\Omega_j\}$ is said to be a \underline{generic $k$-partition of $M$} if $\Omega_1, \ldots, \Omega_k$ are nonempty, disjoint, open, connected subsets of $M$ such that:
\begin{enumerate}
\item each $\Omega_j$ is a smooth manifold with boundary,
\item $M=\Omega_1 \cup \cdots \cup \Omega_k \cup \Sigma$, where $\Sigma := \bigcup_{j=1}^k \pO_j$,
\item  for each $j$, the normal derivative of the ground state $\psi_j$ for the Laplacian on  $\Omega_j$ is nowhere vanishing on $\p\Omega_j$.
\end{enumerate}
\end{define}

These are generic properties in the sense that for a residual set of Riemannian metrics $g$ on $M$, every eigenfunction of the Laplace--Beltrami operator $\Delta_g$ generates a nodal partition satisfying Definition~\ref{def:generic}; see \cite{U76} for details. Generic partitions are by definition exhaustive. The condition (1) is stronger than requiring the set $\Sigma$ to be a smoothly embedded hypersurface. A simple example is when $M$ is a 2-torus and $\Sigma \subset M$ is a smooth, non-separating loop, so that $\Omega := M \setminus \Sigma$ is connected. In this case the topological boundary $\pO = \Sigma$ is smooth, but $\Omega$ lies on both sides of $\Sigma$, and hence is not a manifold with boundary.

We also recall the notion of a bipartite partition, emphasizing that generic partitions are not required to satisfy this condition. First, we declare that two subdomains $\Omega_i$ and $\Omega_j$, with $i \neq j$, are \underline{neighbors} if $\pO_i \cap \pO_j \neq \varnothing$.

\begin{define}
\label{def:bi}
A generic partition $P = \{\Omega_j\}$ is said to be \underline{bipartite} if there exists a function $\eta \colon \{\Omega_j\} \to \{\pm1\}$ such that $\eta(\Omega_i) = -\eta(\Omega_j)$ whenever $\Omega_i$ and $\Omega_j$ are neighbors.
\end{define}

The nodal partition of an eigenfunction $\psi$ is always bipartite\,---\,to prove this one simply defines $\eta(\Omega_j) = \sgn \big(\psi\big|_{\Omega_j}\big)$.

Before calculating the Hessian of $\lambda$, we need to know the manifold structure of the space of equipartitions. Let $P = \{\Omega_j\}$ be a generic $k$-equipartition, and fix a number $s>(n+3)/2$. One may endow the space of $k$-partitions near $P$ with a smooth structure in which nearby partitions are realized as perturbations of $P$, obtained by deforming $\Sigma$ in the normal direction with the deformation parameterized by a function in $H^s(\Sigma)$; see Section \ref{sec:manifold} for details.
With this structure in place, the set $\cEp$ of equipartitions that are close to $P$ is a smooth Hilbert manifold, and it is shown in \cite[Proposition~8]{BKS12}
that the function 
$
\lambda \colon \cEp \to \bbR
$ is smooth. In addition, \cite[Theorem~9]{BKS12} characterizes the critical points of $\lambda$, concluding that $D\lambda(P)=0$ if and only if there exist nonzero real numbers $a_1, \ldots, a_k$ such that
\begin{equation}
\label{eq:normals}
    \left| a_i \frac{\p \psi_i}{\p \nu_i} \right| = \left| a_j \frac{\p \psi_j}{\p \nu_j} \right| \;\; \text{on}\;\; \pO_i \cap \pO_j
\end{equation}
for all $i,j$.
We assume that the $a_j$ are normalized to have $a_1^2 + \cdots + a_k^2 = 1$.
This condition, together with \eqref{eq:normals}, determines each $a_j$ up to a sign. If $P$ is bipartite, it is natural to fix the signs by choosing $\sgn a_j = \eta(\Omega_j)$ for each $j$. In this case the function $\psi$ defined by $\psi\big|_{\Omega_j} = a_j \psi_j$ belongs to  $H^2(M)$, and hence is a global Laplacian eigenfunction, which means $P$ is a nodal partition. However, we emphasize that in general we do not require $P$ to be bipartite.

Assuming $P$ is a critical partition, we choose $\{a_j\}$ as above and define a weight function
\begin{equation}
\label{rhodef}
	\rho\colon \Sigma \to \mathbb R, \qquad \rho\big|_{\pO_j} := \left| a_j \frac{\p \psi_j}{\p \nu_j} \right|.
\end{equation}
The criticality condition \eqref{eq:normals} ensures that $\rho$ is well defined. We then define weighted spaces
\begin{equation}
	L^2_\rho(\Sigma) := \left\{ \phi : \rho\phi \in L^2(\Sigma) \right\}, \qquad \left< \phi_1,\phi_2\right>_{L^2_\rho(\Sigma)} := \left< \rho\phi_1, \rho\phi_2 \right>_{L^2(\Sigma)}
\end{equation}
and
\begin{equation}
\label{Hsrho}
	H^s_\rho(\Sigma) := \left\{ \phi : \rho\phi \in H^s(\Sigma) \right\}, \qquad \left< \phi_1,\phi_2\right>_{H^s_\rho(\Sigma)} := \left< \rho\phi_1, \rho\phi_2 \right>_{H^s(\Sigma)}.
\end{equation}
The genericity assumption on $P$ implies that both $\rho$ and $\rho^{-1}$ are smooth and bounded away from zero, so the weighted and unweighted inner products are equivalent; see Remark~\ref{rem:weight} for further discussion.

Finally, we let $\nu$ be a smooth unit normal vector field along $\Sigma$. As explained in Section \ref{sec:manifold}, this allows us to parameterize $\cEp$ using functions, rather than vector fields, on $\Sigma$. We then introduce a modified version of the two-sided Dirichlet-to-Neumann map, denoted $\DtnNu$ (see Section~\ref{sec:DtN} for a precise definition). 
While the operator $\DtnNu$ depends on the choice of $\nu$, we will see below that its index does not.

The main result of this paper describes the relationship between the modified Dirichlet-to-Neumann map (a self-adjoint operator), the Hessian (a closable bilinear form), and the self-adjoint operator generated by the closure of the Hessian. In what follows, we write\footnote{Throughout the paper, all integrals are with respect to the Riemannian volume measure on $M$, or the induced surface measure on $\Sigma$; we do not indicate the measure explicitly since it will always be clear from the context.} 
\begin{equation}
\label{def:SH}
	\Fnu := \Bigg\{\phi \in L^2_\rho(\Sigma) : \int_{\pO_j} (\nu\cdot\nu_j) \phi\left(\frac{\p \psi_j}{\p \nu_j}\right)^2 = 0 \text{ for all } j \Bigg\}.
\end{equation}
We will see below that if $P$ is a critical partition, then $H^s_\rho(\Sigma) \cap \Fnu$ coincides with $\Tp \cEp$, the tangent space at $P$ to the manifold $\cEp$ of nearby equipartitions.

\begin{theorem}
\label{thm:Hess1}
Fix $s>(n+3)/2$ and let $P$ be a generic critical equipartition for $\lambda\colon\cEp \to \mathbb R$. If $\nu$ is a smooth unit normal vector field along $\Sigma$, then
  \begin{equation}
\label{Hess1}
	\Hess \lambda(P)(\phi_1\nu,\phi_2\nu) = 2\left< \DtnNu(\rho\phi_1), \rho\phi_2 \right>_{L^2(\Sigma)}
\end{equation}
for all $\phi_1, \phi_2 \in H^s_\rho(\Sigma)\cap \Fnu$. The bilinear form  $h(\phi_1,\phi_2) := \Hess \lambda(P)(\phi_1\nu,\phi_2\nu)$, with $\dom(h) = H^s_\rho(\Sigma)\cap \Fnu$, is semibounded and closable on $\Fnu$,
and therefore generates a self-adjoint operator $\cH$, which is given by
\begin{equation}
\label{eq:H}
	\cH (\phi) = 2  \rho^{-1} \DtnNu (\rho \phi)
\end{equation}
and has domain
\begin{equation}
\label{eq:domain}
H^1_\rho(\Sigma) \cap \Fnu \subseteq \dom(\cH) \subseteq H^{1/2}_\rho(\Sigma) \cap \Fnu.
\end{equation}
\end{theorem}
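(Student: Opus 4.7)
The plan is to split the proof into three stages: (a) compute $\Hess\lambda(P)$ explicitly as a boundary integral via shape calculus on the constrained manifold $\cEp$, (b) identify this integral with the bilinear form $2\bigl<\DtnNu(\rho\,\cdot\,),\rho\,\cdot\,\bigr>_{L^2(\Sigma)}$, and (c) invoke Kato's first representation theorem to extract the self-adjoint operator $\cH$ together with its domain properties from the closure of $h$.

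For (a), I would use the chart on $\cEp$ from Section~\ref{sec:manifold} that parameterizes nearby equipartitions by $\phi\in H^s_\rho(\Sigma)\cap\Fnu\cong\Tp\cEp$ via normal deformation of $\Sigma$ in the direction $\phi\nu$. Hadamard's formula gives the first shape derivative of the Dirichlet ground-state eigenvalue on each $\Omega_j$ as
\[
    \frac{d}{dt}\bigg|_{t=0}\lambda_1(\Omega_j(t)) \;=\; -\int_{\pO_j}(\nu\cdot\nu_j)\,\phi\,\bigl(\p\psi_j/\p\nu_j\bigr)^2,
\]
which vanishes on $\Fnu$, confirming $D\lambda(P)=0$. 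For the second variation I would apply the standard second-order shape-derivative formula of a Dirichlet eigenvalue (as in \cite{BKS12}) on each $\Omega_j$, weighted by the Lagrange multipliers $a_j^2$ supplied by the criticality/normalization condition $\sum a_j^2=1$. The result is a boundary integral involving $\phi_1,\phi_2,\p\psi_j/\p\nu_j$, and the first-order perturbation $\dot\psi_j$ of the ground state, where $\dot\psi_j$ solves $(\Delta+\lambda_*)\dot\psi_j=0$ in $\Omega_j$ with Dirichlet data determined by $\phi$ and $\p\psi_j/\p\nu_j$; the Fredholm solvability condition for this problem is precisely $\phi\in\Fnu$.

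For (b), definition \eqref{rhodef} of $\rho$, combined with a consistent choice of signs for the $a_j$ relative to the global normal $\nu$, makes the Dirichlet data of $a_j\dot\psi_j$ on $\pO_j$ equal to $\rho\phi$, so the piecewise function $u$ with $u\big|_{\Omega_j}:=a_j\dot\psi_j$ is exactly the elliptic extension appearing in \eqref{LambdaS} with boundary data $f=\rho\phi_1$. The sign factors $\nu\cdot\nu_j$ appearing in the constraint \eqref{def:SH} defining $\Fnu$ are precisely those that distinguish $\DtnNu$ from the unmodified $\Dtn$, and multiplication by $\rho$ identifies $\Fnu$ with the constraint subspace on which $\DtnNu$ is defined. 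Summing the second shape derivatives over $j$ then collapses the individual one-sided normal derivatives $\p u_j/\p\nu_j$ into the jump $\nor u$ from \eqref{LambdaS}; since $\rho\phi_2$ lies in the relevant constraint subspace, the projection $\Pi$ in the definition of $\DtnNu$ acts as the identity in the pairing, yielding \eqref{Hess1}.

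For (c), since $\rho,\rho^{-1}\in C^\infty(\Sigma)$ are bounded above and below, the multiplication map $M_\rho\colon\phi\mapsto\rho\phi$ is a unitary isomorphism from $L^2_\rho(\Sigma)\cap\Fnu$ onto the corresponding constraint subspace of $L^2(\Sigma)$ and restricts to an isomorphism of the weighted and unweighted $H^t$-spaces for every $t\geq 0$. Under $M_\rho$, identity \eqref{Hess1} reads $h(\phi_1,\phi_2)=2\bigl<\DtnNu M_\rho\phi_1, M_\rho\phi_2\bigr>_{L^2(\Sigma)}$, so $h$ is the $M_\rho$-pull-back of the quadratic form of $2\DtnNu$ restricted to $M_\rho(H^s_\rho(\Sigma)\cap\Fnu)$. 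Because $H^s\subset H^{1/2}$ is dense in the form domain of $\DtnNu$, this restriction is a form core, hence $h$ is semibounded and closable on $\Fnu$, its closure has form domain $H^{1/2}_\rho(\Sigma)\cap\Fnu$, and Kato's first representation theorem gives the associated self-adjoint operator $\cH=2M_\rho^{-1}\DtnNu M_\rho$, which is \eqref{eq:H}. The domain inclusion \eqref{eq:domain} follows from the analogous inclusion for $\DtnNu$ to be established in Section~\ref{sec:DtN}. The main obstacle I anticipate is stage (a): tracking the interaction between the equipartition constraint, the Lagrange multipliers $a_j^2$, the sign factors $\nu\cdot\nu_j$ produced by converting between the global and local normals, and the symmetrization factor of $2$ in the Hessian requires delicate bookkeeping; once that identity is in hand, (b) and (c) follow as algebraic and functional-analytic consequences.
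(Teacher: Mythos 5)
Your three stages coincide with the paper's own proof: Section~\ref{sec:var} computes the second variation of each $\lambda_1(\Omega_j)$ by the second shape-derivative formula \eqref{Grinfeld}, sums with the weights $a_j^2$, and identifies $u_j = a_j w_j$ with the elliptic extension of $\rho\phi$, so that the Hessian equals $2a(\rho\phi,\rho\phi)$; Section~\ref{sec:close} then closes the form and applies the representation theorem exactly as in your stage (c), using \eqref{a:bound} and \eqref{a:coer}. The one substantive point hidden in your ``delicate bookkeeping'' is that \eqref{Grinfeld} contains the terms $\big(H_jC_j^2-C_j'\big)\big(\p\psi_j/\p\nu_j\big)^2$, where the acceleration $C_j'$ depends on the choice of path in $\cEp$ and not merely on the tangent vector $\phi\nu$, so the second variation is well defined on $\Tp\cEp$ only because these terms cancel pairwise across each interface $\pO_i\cap\pO_j$ in the weighted sum --- this is where the criticality condition \eqref{eq:normals} is actually used, together with the oddness relations \eqref{mean} and \eqref{Cjodd} and Lemma~\ref{intsum}. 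Once that cancellation is recorded, your stages (b) and (c) are exactly the paper's argument and are correct as stated.
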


\begin{rem}
\label{rem:weight}
The weight $\rho$ may appear to be unnecessary, since the $L^2$ and $L^2_\rho$ norms are equivalent, and similarly for $H^s$ and $H^s_\rho$, so it does not affect the closability of $h$. However, it is important for two reasons:
\begin{enumerate}
    \item It ensures that $\cH$ is unitarily equivalent to $\DtnNu$, and not merely congruent (Corollary~\ref{cor:Morse}).
    \item In the non-generic case, where the nodal lines are allowed to intersect, the weight $\rho$ will vanish at these points. When this happens the norms are no longer equivalent, and one must use the weighted norm to obtain a closable bilinear form.
\end{enumerate}
Therefore, we describe the form domain in terms of the weighted space $H^s_\rho$, in order to be consistent with future work where this distinction will be crucial \cite{BCCKM}.
\end{rem}

We assume for the rest of this section that $s$ and $\nu$ have been fixed. Since multiplication by $\rho$ gives an isometric isomorphism from $L^2_\rho(\Sigma)$ to $L^2(\Sigma)$, 
we get the following.

\begin{cor}
\label{cor:Morse}
If $P$ is a generic critical equipartition,
then $\cH$ is unitarily equivalent to $2\DtnNu$.
\end{cor}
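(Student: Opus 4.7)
The plan is to exhibit the desired unitary explicitly as multiplication by the weight $\rho$. Define $U \colon L^2_\rho(\Sigma) \to L^2(\Sigma)$ by $U\phi := \rho\phi$. This is an isometric isomorphism of Hilbert spaces: the isometry is immediate from the definition $\langle \phi_1,\phi_2\rangle_{L^2_\rho(\Sigma)} = \langle \rho\phi_1, \rho\phi_2\rangle_{L^2(\Sigma)}$, and surjectivity follows because $\rho$ is smooth and bounded away from zero on $\Sigma$ by the genericity assumption on $P$, so $U^{-1}$ is given by multiplication by $\rho^{-1}$.

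Next I would apply $U$ directly to the formula \eqref{eq:H} from Theorem~\ref{thm:Hess1}. For $\phi \in \dom(\cH)$ one computes
\[
    U\cH\phi \;=\; \rho \cdot 2\rho^{-1}\DtnNu(\rho\phi) \;=\; 2\DtnNu(U\phi),
\]
so $U\cH = (2\DtnNu) U$ on $\dom(\cH)$. Combined with a bijection of domains this is exactly the assertion of unitary equivalence. To match the domains, note that multiplication by the smooth, nowhere-vanishing function $\rho$ preserves every Sobolev space $H^s(\Sigma)$, so the sandwich in \eqref{eq:domain} transports under $U$ into an analogous sandwich between $H^1(\Sigma) \cap U(\Fnu)$ and $H^{1/2}(\Sigma) \cap U(\Fnu)$. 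A short algebraic check, using $\rho|_{\p\Omega_j} = |a_j\, \p\psi_j/\p\nu_j|$, shows that the linear constraint defining $\Fnu$ transforms, under the substitution $f = \rho\phi$, into the defining constraint of the space $\Snu$ on which $\DtnNu$ acts; hence $U$ restricts to a unitary isomorphism $\Fnu \to \Snu$ and carries $\dom(\cH)$ bijectively onto $\dom(\DtnNu)$.

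The whole argument collapses to a one-line computation once $U$ is identified, so I do not anticipate any serious obstacle. The only piece of bookkeeping is the verification that $\Fnu$ and $\Snu$ correspond under multiplication by $\rho$, which is routine given the explicit formula for the weight and the normalization $a_1^2 + \cdots + a_k^2 = 1$. If desired, one can avoid even this step by invoking the general principle that for any self-adjoint operator $A$ and unitary $U$, the operator $U A U^{-1}$ is automatically self-adjoint on $U(\dom(A))$, so that the equality $\cH = 2 U^{-1} \DtnNu U$ forces the domains to correspond.
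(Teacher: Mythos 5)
Your proposal is correct and is essentially the paper's own argument: the corollary is deduced from Theorem~\ref{thm:Hess1} precisely by observing that multiplication by $\rho$ is an isometric isomorphism $L^2_\rho(\Sigma)\to L^2(\Sigma)$ which conjugates $\cH = 2\rho^{-1}\DtnNu(\rho\,\cdot\,)$ into $2\DtnNu$. The domain bookkeeping you describe ($\Fnu\to\Snu$ and $\dom(\cH)=\{\phi:\rho\phi\in\dom(\DtnNu)\}$) is already built into the statement and proof of Theorem~\ref{thm:Hess1}, so nothing further is needed.
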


This allows us to compute eigenvalues and eigenfunctions of $\cH$ using $\DtnNu$. However, we are ultimately interested in $\Hess \lambda(P)$, rather than its closure (or the corresponding self-adjoint operator $\cH$), which is defined on a strictly larger domain. The domain inclusion implies
\begin{equation}
\label{Morseleq}
	n_-\big(\Hess \lambda(P)\big) \leq n_-(\cH) = n_-(\DtnNu),
\end{equation}
and similarly for the nullity $n_0$. For a generic partition we prove that this is actually an equality.

\begin{theorem}
\label{thm:smooth}
If $P$ is a generic critical equipartition, then
\begin{equation}
\label{n-all}
	n_-\big(\Hess \lambda(P)\big) = n_-(\cH) = n_-(\DtnNu),
\qquad 
	n_0\big(\Hess \lambda(P)\big) = n_0(\cH) = n_0(\DtnNu).
\end{equation}
\end{theorem}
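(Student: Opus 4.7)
The plan is to reduce everything to Corollary~\ref{cor:Morse}, which gives $n_-(\cH) = n_-(\DtnNu)$ and $n_0(\cH) = n_0(\DtnNu)$ via unitary equivalence, so what remains is the pair of equalities $n_-(\Hess \lambda(P)) = n_-(\cH)$ and $n_0(\Hess \lambda(P)) = n_0(\cH)$. Write $h$ for the form $(\phi_1,\phi_2)\mapsto \Hess\lambda(P)(\phi_1\nu,\phi_2\nu)$ and $\bar h$ for its closure (the form of $\cH$). The $\leq$ direction already appears in \eqref{Morseleq} and is a formal consequence of the inclusion $\dom(h) = H^s_\rho(\Sigma) \cap \Fnu \subseteq \dom(\cH)$ together with the fact that $h$ is the restriction of $\bar h$ to $\dom(h)$: any subspace of $\dom(h)$ on which $h$ is negative definite (respectively null) is also such a subspace for $\bar h$, so $n_\pm(h)\leq n_\pm(\cH)$. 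For the nullity part, $\ker h \subseteq \ker \bar h = \ker \cH$ additionally uses density of $\dom(h)$ in $\dom(\bar h)$ in the form norm and continuity of $\bar h$.

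For the reverse direction, the key observation is that every eigenfunction of $\cH$ already lies inside the smaller form domain $H^s_\rho(\Sigma) \cap \Fnu$. By \eqref{eq:H}, $\cH \phi = \mu \phi$ is equivalent to $\DtnNu(\rho \phi) = (\mu/2)\rho \phi$, so $\rho\phi$ is an $L^2$ eigenfunction of $\DtnNu$. Using the description of $\DtnNu$ to be given in Section~\ref{sec:DtN}, one sees that it is a classical first-order elliptic pseudodifferential operator modulo a finite-rank correction encoding the normal-derivative matching across $\Sigma$ and the projection onto $\Fnu$. Since finite-rank operators do not alter the principal symbol, ellipticity is preserved, and standard elliptic regularity yields $\rho\phi \in C^\infty(\Sigma)$. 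The genericity of $P$ ensures that $\rho$ is smooth and bounded away from zero on $\Sigma$, so $\phi \in C^\infty(\Sigma) \subseteq H^s_\rho(\Sigma)$; combined with $\phi \in \dom(\cH) \subseteq \Fnu$, this gives $\phi \in \dom(h)$.

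With this regularity in hand, the negative spectral subspace of $\cH$, of dimension $n_-(\cH)$, sits entirely inside $\dom(h)$, and $h$ (which agrees with $\bar h$ on this domain) is negative definite on it. This yields $n_-(h) \geq n_-(\cH)$. The same smoothness argument shows $\ker\cH \subseteq \dom(h)$, whence $\ker\cH \subseteq \ker h$, completing the nullity equality. Combining both directions and Corollary~\ref{cor:Morse} establishes \eqref{n-all}.

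The main obstacle is justifying the regularity step, namely that the two-sided, projected operator $\DtnNu$ retains the elliptic pseudodifferential structure of the standard Dirichlet-to-Neumann map. The one-sided map on each $\Omega_j$ is classical, so the content is to verify that assembling these maps along $\Sigma$ and composing with the $L^2$-projection onto $\Fnu$ perturbs matters only by a finite-rank contribution whose range consists of smooth-enough functions (concretely, linear combinations of the boundary traces $\partial \psi_j / \partial \nu_j$ appearing in the definition of $\Fnu$), leaving the principal symbol intact and permitting the standard bootstrap to $C^\infty(\Sigma)$.
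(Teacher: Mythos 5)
Your overall skeleton matches the paper's: reduce to Corollary~\ref{cor:Morse}, note that \eqref{Morseleq} gives the easy inequality, and obtain the reverse inequality by showing that every eigenfunction of $\cH$ is smooth and hence lies in $\dom\big(\Hess\lambda(P)\big) = H^s_\rho(\Sigma)\cap\Fnu$. Where you diverge is in how regularity of the $\DtnNu$-eigenfunctions is established. The paper does \emph{not} treat $\DtnNu$ as a pseudodifferential operator; instead it works directly with the interface problem and bootstraps using a transmission regularity theorem (Lemma~\ref{lemma:transmission}, from McLean): given $(\Delta+\lambda_*)u_i = (\Delta+\lambda_*)u_j = 0$ with matching traces equal to $f$ and $\chi_i\,\p u_i/\p\nu_i + \chi_j\,\p u_j/\p\nu_j = \mu f \in H^{r+1/2}$, one gains two derivatives in the interior, hence $3/2$ on the trace, and iterates. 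This sidesteps entirely the question of what kind of operator $\DtnNu$ is.

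Your pseudodifferential route can be made to work, but the key claim --- that $\DtnNu$ is a classical first-order elliptic $\Psi$DO ``modulo a finite-rank correction'' --- is not justified as stated, and this is more than a presentational issue. The difficulty is that $\lambda_*$ is a Dirichlet eigenvalue of \emph{each} $\Omega_j$, so the one-sided Dirichlet-to-Neumann maps at energy $\lambda_*$ are not ``classical'': the boundary value problem \eqref{BVP} is solvable only for traces in the codimension-one subspace cut out by $\int_{\pO_j} f\,\p\psi_j/\p\nu_j = 0$, and the solution is fixed only after imposing $\int_{\Omega_j}u_j^f\psi_j=0$. To identify the resulting map with an elliptic $\Psi$DO one must compare with the DtN map at a nearby regular energy $\lambda'$ (writing $u_j^f = R_jf + w$ with $w$ solving a Dirichlet problem with right-hand side $(\lambda'-\lambda_*)R_jf$, orthogonal to $\psi_j$); the discrepancy is then an operator of \emph{lower order} plus a finite-rank piece with smooth range, not merely finite-rank. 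None of this is fatal for ellipticity --- the principal symbol is the sum of the two one-sided symbols and is elliptic, and lower-order perturbations are harmless for the bootstrap --- but it is precisely the step you defer to as ``the main obstacle,'' and it is the whole content of the regularity argument. Also note that the genuinely finite-rank piece comes from the projection $\Pi_{\Snu}$ (whose range is spanned by the smooth functions $\chi_j\,\p\psi_j/\p\nu_j$), not from the ``normal-derivative matching,'' which is the elliptic part. If you want a self-contained proof, the paper's transmission-lemma bootstrap is the more economical path; if you want the $\Psi$DO picture, you must supply the comparison-at-a-regular-energy argument above.
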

This is essentially a regularity statement\,---\,we show that the eigenfunctions of $\cH$ are smooth, and hence contained in $H^s(\Sigma)$ regardless of the choice of $s$.

It was shown in \cite{BCHS} that the Morse index of $\DtnNu$ equals the \emph{defect} of the partition, a quantity that generalizes the nodal deficiency in the non-bipartite case. Combining this with Theorem~\ref{thm:smooth} therefore extends the results of \cite{BKS12}, which only treated nodal (and hence bipartite) partitions.

It is clear from \eqref{n-all} that the index and nullity of $\DtnNu$ (and also of $\cH$) do not depend on the choice of $\nu$. We will see that different choices of $\nu$ lead to unitarily equivalent Dirichlet-to-Neumann operators. In the bipartite case it follows that $\DtnNu$ is unitarily equivalent to $\Dtn$ for \emph{any} choice of $\nu$; see Remarks~\ref{rem:Lambda} and~\ref{rem:nu}.


\begin{cor}
\label{cor:bipartite}
If $P$ is a generic bipartite critical equipartition, then
\begin{equation}
	n_-\big(\Hess \lambda(P)\big) = n_-(\Dtn), \qquad n_0\big(\Hess \lambda(P)\big) = n_0(\Dtn).
\end{equation}
\end{cor}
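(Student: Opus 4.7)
The plan is to combine Theorem~\ref{thm:smooth} with a direct identification of $\DtnNu$ and $\Dtn$ in the bipartite setting via a well-chosen unit normal $\nu$. The reduction is immediate: Theorem~\ref{thm:smooth} gives $n_-\big(\Hess \lambda(P)\big) = n_-(\DtnNu)$ and $n_0\big(\Hess \lambda(P)\big) = n_0(\DtnNu)$ for any smooth unit normal $\nu$ along $\Sigma$, so it suffices to exhibit, for some choice of $\nu$, a unitary carrying $\DtnNu$ to $\Dtn$.

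To construct this $\nu$, I exploit the bipartite structure. By Definition~\ref{def:bi} there is a sign function $\eta$, and the gluing $\psi := \sum_j a_j \psi_j$ with $\sgn a_j = \eta(\Omega_j)$ belongs to $H^2(M)$ and satisfies $(\Delta + \lambda_*)\psi = 0$. The generic assumption forces $\nabla \psi$ to be nowhere zero and purely normal along $\Sigma$, so I may choose $\nu$ along $\Sigma$ with $\p\psi/\p\nu < 0$ everywhere. Using the Hopf lemma (which ensures $\p\psi_j/\p\nu_j < 0$) together with $\psi|_{\Omega_j} = a_j\psi_j$, a short calculation gives
\begin{equation*}
  \nu\cdot\nu_j = \eta(\Omega_j), \qquad a_j\frac{\p \psi_j}{\p \nu_j} = -\eta(\Omega_j)\,\rho \qquad \text{on each } \pO_j,
\end{equation*}
so $\nu\cdot\nu_j$ takes a global constant sign on each face.

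With this $\nu$, after canceling the constant signs and positive scalars, the constraint defining $\Fnu$ reduces to $\int_{\pO_j}\phi\rho^2 = 0$ and the constraint defining $\S$ reduces to $\int_{\pO_j} f\rho = 0$. Hence the isometric isomorphism $U\phi := \rho\phi$ from $L^2_\rho(\Sigma)$ onto $L^2(\Sigma)$, which already underlies Corollary~\ref{cor:Morse}, maps $\Fnu$ bijectively onto $\S$ and is unitary between these subspaces. It remains to verify that $U$ intertwines $\DtnNu$ with $\Dtn$. Both operators are built by taking a boundary trace, solving $(\Delta + \lambda_*)u_j = 0$ on each $\Omega_j$ with that trace as Dirichlet datum, computing the jump in the normal derivative across $\Sigma$, and orthogonally projecting back onto the respective constraint subspace. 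They differ only through sign factors $\nu\cdot\nu_j$ and the $\rho$-weighting of the trace; both are absorbed consistently by $U$ precisely because $\nu\cdot\nu_j$ is globally constant on each face.

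The main obstacle is this last piece of bookkeeping: the signs and the weight $\rho$ must be tracked through each step to confirm the intertwining. This is possible exactly because the bipartite hypothesis permits $\nu\cdot\nu_j$ to be globally constant on every face; in the non-bipartite setting no such choice of $\nu$ exists, which is precisely the reason $\Dtn$ must be replaced by the modified $\DtnNu$ in Theorem~\ref{thm:smooth} in general.
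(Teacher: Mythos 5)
Your argument is correct and is essentially the paper's own route: reduce via Theorem~\ref{thm:smooth} to identifying $\DtnNu$ with $\Dtn$, then use bipartiteness to pick $\nu$ so that each $\chi_j = \nu\cdot\nu_j$ is constant (your construction via the glued eigenfunction $\psi$ reproduces the choice $\nu|_{\pO_j} = \eta(\Omega_j)\nu_j$ of Lemma~\ref{lemma:bi}), at which point $\Snu = \S$ and $\DtnNu = \Dtn$ exactly, as in Remark~\ref{rem:Lambda}. One small correction to your last step: the map $U\phi = \rho\phi$ sends $L^2_\rho(\Sigma)$ to $L^2(\Sigma)$ and intertwines $\cH$ with $2\DtnNu$ (that is Corollary~\ref{cor:Morse}), so it cannot be the unitary ``carrying $\DtnNu$ to $\Dtn$''\,---\,both of those operators already live on subspaces of the unweighted $L^2(\Sigma)$, and with your choice of $\nu$ they coincide on the nose, with no weight or unitary needed; the constant signs $\chi_j$ cancel because the Dirichlet data $\chi_j f$ and the normal derivative factor $\chi_j\,\p u_j/\p\nu_j$ each carry one factor of $\chi_j$ and $\chi_j^2=1$. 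Since Theorem~\ref{thm:smooth} already equates the index and nullity of $\Hess\lambda(P)$ with those of $\DtnNu$, the weight $\rho$ plays no role in this corollary.
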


This is the desired equality \eqref{nodal:equal} for generic nodal partitions. However, the significance of our results goes far beyond establishing this equality.
In particular, it gives us a means of finding eigenfunctions of the Hessian in terms of the two-sided Dirichlet-to-Neumann map. Indeed, we see that $\phi \in H^1_\rho(\Sigma)$ is an eigenfunction of $\cH$ if and only if
\begin{equation}
\label{eigenfunction}
	\rho \phi \in H^1(\Sigma)
\end{equation}
is an eigenfunction of $\DtnNu$. Therefore, we can find eigenfunctions of $\cH$ by computing $\DtnNu$ eigenfunctions and then dividing by the weight $\rho$, which is nonvanishing by our genericity assumption. An example of this procedure is given in Section~\ref{sec:example}.

We expect these results will be useful in the study of spectral minimal partitions, which are partitions that minimize the quantity
\[
	\max_{1 \leq j \leq k} \lambda_1(\Omega_j).
\]
It is known that such minimal partitions always exist, are equipartitions, and satisfy certain regularity properties; see \cite{helffer2009nodal} and references therein. However, they do not necessarily satisfy the genericity conditions in Definition \ref{def:generic}. In particular, the set $\Sigma$ may contain self intersections, in which case it is not smooth.

Generalizing the above results to this case is significantly more involved, and will be addressed in a future work \cite{BCCKM}. Some difficulties of dealing with non-generic partitions were explored in a recent series of papers
on quantum graphs \cite{HK21,HKMP21,KKLM21}. Here we mention some of the difficulties that arise on manifolds. To begin with, the structure of the space of partitions becomes more complicated when self-intersections are allowed. Moreover, the weight function $\rho$ will vanish at the points of intersection. Therefore, if $f \in H^1(\Sigma)$ is an eigenfunction for $\DtnNu$, it will still be the case that $\rho^{-1} f \in H^1_\rho(\Sigma)$ is an eigenfunction for $\cH$, but we can no longer guarantee that $\rho^{-1} f$ is smooth, which means it may not be contained in the domain of $\Hess \lambda(P)$ (i.e. the tangent space to the manifold of equipartitions). As a result, the inequality \eqref{Morseleq} may be strict. This suggests that there are ``deformations" of $P$ that decrease the energy $\lambda$ but are not smooth, e.g. they change the topology of the nodal set.

\subsection*{Outline}
In Section \ref{sec:prelim} we review some fundamental definitions and constructions from \cite{BCHS,BCM19,BKS12} and \cite{CJM2}, which form the basis for our analysis. In Section \ref{sec:var} we compute the Hessian of $\lambda$, establishing \eqref{Hess1}. In Section \ref{sec:close} we describe the closure of the Hessian, which yields Theorem \ref{thm:Hess1}, and then prove all of the corollaries. 
Finally, in Section \ref{sec:example} we illustrate our main results and formulas with an example.

\subsection*{Acknowledgments}
The authors thank Ram Band, Sebastian Egger, Bernard Helffer, Peter Kuchment, and Mikael Persson Sundqvist for helpful comments and discussions.  G.B. acknowledges the support of NSF Grant DMS-1815075.  Y.C. was supported by the Alfred P. Sloan Foundation and NSF CAREER Grant DMS-2045494 and DMS-1900519. G.C. acknowledges the support of NSERC grant RGPIN-2017-04259.  J.L.M.
acknowledges support from the NSF through NSF CAREER Grant
DMS-1352353 and NSF grant DMS-1909035.  The authors are grateful to the AIM SQuaRE program for hosting them and supporting the initiation of this project.


\section{Preliminaries}
\label{sec:prelim}
Before proving our main results, we review the definitions of the objects that appear in the statements of those results, namely the manifold of equipartitions and the two-sided Dirichlet-to-Neumann map.

\subsection{The manifold of equipartitions}
\label{sec:manifold}
We first describe the set $\cPp$ of $k$-partitions close to $P$, and then the subset $\cEp \subset \cPp$ of equipartions, which is a submanifold of codimension $k-1$.

Assuming that $P$ is a generic $k$-partition, with nodal set $\Sigma$, we let $H^s(\Sigma)$ denote the Sobolev space of $H^s$ functions on $\Sigma$, and similarly for $H^s(M)$.
We also let $\cD^s(M)$ denote the set of $H^s$ diffeomorphisms of $M$. It is natural to parameterize $\cPp$ using vector fields defined along $\Sigma$. We find it more convenient to work with functions, however, so we fix\footnote{The smooth structure does not depend on the choice of unit normal, so we may assume that this is the same $\nu$ that appears in the statement of Theorem~\ref{thm:Hess1}.} a smooth unit normal vector field $\nu$ along $\Sigma$, and extend it arbitrarily to a smooth vector field $\tilde\nu$ on all of $M$. (The extension $\tilde\nu$ is allowed to vanish away from $\Sigma$, so there are no topological obstructions to its existence.)

We next fix a value of $s > (n+3)/2$ and choose a bounded extension operator $E^s \colon H^s(\Sigma) \to H^{s+1/2}(M)$. For any $\phi \in H^s(\Sigma)$ we let $\varphi_{\scriptscriptstyle \phi}$ denote the flow along the vector field $(E^s \phi)\tilde\nu$, evaluated at time $t=1$. Our choice of $s$ guarantees that $(E^s \phi)\tilde\nu$ is of class $H^{s+1/2}$ with $s+1/2 > n/2 + 2$, so \cite[Theorem~3.1]{EM70} implies $\varphi_{\scriptscriptstyle \phi} \in \cD^{s+1/2}(M)$. We then define
\begin{equation}
	\cPp = \big\{ \varphi_{\scriptscriptstyle \phi}(P) : \phi \in \cU \big\},
\end{equation}
where $\cU \subset H^s(\Sigma)$ is a neighborhood of zero. For $\cU$ sufficiently small the map $\phi \mapsto \varphi_{\scriptscriptstyle \phi}(P)$ is injective, and hence gives a bijection from $\cU$ onto $\cPp$. This gives $\cPp$ the structure of a smooth Hilbert manifold, and the tangent space at $P$ can be identified with $H^s(\Sigma)$.

\begin{rem}
The space $\cPp$ is automatically a smooth  manifold because it can be covered by a single coordinate chart, so there are no overlap/compatibility conditions to check.
This is no longer true if one considers the larger space
$
	\big\{ \varphi(P) : \varphi \in \cD^{s+1/2} \big\}
$
of all partitions that are $H^s$-diffeomorphic (but not necessarily close) to $P$, but this distinction is irrelevant for the current paper as we are only interested in local computations.
\end{rem}


We now define the subset $\cEp$ of equipartitions by
\begin{equation}
	\cEp = \big\{ \tilde P = \{\tilde\Omega_j\} \in \cPp :\; \lambda_1(\tilde\Omega_1) = \cdots = \lambda_1(\tilde\Omega_k) \big\}.
\end{equation}
Defining a map $\Xi \colon \cPp \to \bbR^k$ by $\Xi(\tilde P) = \big(  \lambda_1(\tilde\Omega_1), \ldots, \lambda_1(\tilde\Omega_k) \big)$, 
we have that $\cEp \subset \cPp$ is the preimage of the diagonal in $\bbR^k$, and it follows from a transversality argument, given in \cite[Section~3.1]{BKS12}, that it is a smoothly embedded submanifold of codimension $k-1$.

Recalling that $\Tp \cPp$ can be identified with $H^s(\Sigma)$, or equivalently $H^s_\rho(\Sigma)$, the tangent space to $\cEp$ will consist of the variations that preserve the equipartition condition, meaning the first variation of the ground state energy on each $\Omega_j$ is the same. By Hadamard's formula, this is equivalent to requiring that the integrals
\begin{equation}
\label{eq:Ham}
    \int_{\pO_j} (\phi\nu) \cdot \nu_j \left(\frac{\p \psi_j}{\p \nu_j}\right)^2
\end{equation}
coincide for all $j=1, \ldots, k$. The tangent space to $\cEp$ at $P$ can thus be described as
\begin{equation}\label{e:tangent}
	\Tp \cEp = \left\{\phi \in H^s_\rho(\Sigma) : \int_{\pO_1} \chi_1 \phi\left(\frac{\p \psi_1}{\p \nu_1}\right)^2 = \cdots
	= \int_{\pO_k} \chi_k \phi \left(\frac{\p \psi_k}{\p \nu_k}\right)^2 \right\},
\end{equation}
where we have defined
\begin{equation}\label{eqn:chis}
    \chi_j \colon \pO_j \to \{\pm1\},\qquad 	\chi_j = \nu \cdot \nu_j
\end{equation}
for each $j$. If $P$ is a generic \emph{critical} equipartition, then all of the integrals in \eqref{eq:Ham} will vanish, and we obtain
\begin{equation}
    \Tp \cEp = H^s_\rho(\Sigma) \cap \Fnu,
\end{equation}
where $\Fnu$ is defined in \eqref{def:SH}.



On each connected component of $\pO_j$ we will have either $\chi_j = 1$ or $\chi_j = -1$, but it is possible that both signs occur on different components of the boundary\,---\,if $P$ is non-bipartite this is inevitable. Some different choices of $\nu$, and the resulting $\chi_j$, are shown for a 3-partition of the circle in Figure~\ref{fig:nu}. 

\begin{lemma}
\label{lemma:bi}
A generic partition $P$ is bipartite if and only if there exists a choice of $\nu$ for which every $\chi_j$ is constant.
\end{lemma}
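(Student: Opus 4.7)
The plan is to prove the two implications directly, using the pointwise identity $\chi_j = \nu\cdot\nu_j$ together with the fact that for a generic partition $\Sigma$ is a smooth embedded hypersurface and that on each connected component of $\partial\Omega_j$ exactly one other element $\Omega_i$ sits on the opposite side, so that $\nu_i = -\nu_j$ on $\partial\Omega_i\cap\partial\Omega_j$.

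A first general observation: since $\nu$ and $\nu_j$ are both unit normals to the same smooth piece of $\Sigma$ along $\partial\Omega_j$, the function $\chi_j = \nu\cdot\nu_j$ takes values in $\{\pm 1\}$, and by continuity is constant on each connected component of $\partial\Omega_j$. So requiring $\chi_j$ to be constant on $\partial\Omega_j$ amounts to requiring that it take the same sign on every component.

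For the $(\Leftarrow)$ direction I would suppose that $\nu$ has been chosen so that each $\chi_j$ equals a single sign $c_j\in\{\pm 1\}$ on all of $\partial\Omega_j$, and then set $\eta(\Omega_j) := c_j$. If $\Omega_i$ and $\Omega_j$ are neighbors, then on $\partial\Omega_i\cap\partial\Omega_j$ the outward normals are opposite, $\nu_i = -\nu_j$, so
\[
c_i \;=\; \chi_i \;=\; \nu\cdot\nu_i \;=\; -\nu\cdot\nu_j \;=\; -\chi_j \;=\; -c_j,
\]
which is precisely the bipartite condition of Definition~\ref{def:bi}.

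For the $(\Rightarrow)$ direction, starting from a bipartite labelling $\eta$, I would define a candidate unit normal by the piecewise formula $\nu\big|_{\partial\Omega_j} := \eta(\Omega_j)\,\nu_j$, and then check that this prescription is consistent across shared boundary pieces. On $\partial\Omega_i\cap\partial\Omega_j$ the prescription from the $\Omega_i$-side is $\eta(\Omega_i)\nu_i$ and from the $\Omega_j$-side is $\eta(\Omega_j)\nu_j$; using $\eta(\Omega_j) = -\eta(\Omega_i)$ and $\nu_j = -\nu_i$ these coincide, so $\nu$ is a well-defined smooth unit normal on $\Sigma$ with $\chi_j \equiv \eta(\Omega_j)$ on $\partial\Omega_j$. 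The only real point that needs care — and the step where the genericity hypothesis (1) of Definition~\ref{def:generic} is essential — is the smoothness/well-definedness of the glued $\nu$: we need that at each point of $\Sigma$ exactly two partition elements meet (so there is no three-way ambiguity) and that the outward normals from neighbouring $\Omega_i$'s fit together smoothly, which is guaranteed by the smooth manifold-with-boundary hypothesis on each $\Omega_j$.
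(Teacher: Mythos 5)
Your proposal is correct and follows essentially the same argument as the paper: for the forward direction you glue $\nu|_{\partial\Omega_j} := \eta(\Omega_j)\nu_j$ and check consistency on shared boundaries using $\eta(\Omega_i)=-\eta(\Omega_j)$ and $\nu_i=-\nu_j$, and for the converse you set $\eta(\Omega_j):=\chi_j$ and verify the sign flip across neighbors. The extra remarks on local constancy of $\chi_j$ and the role of the manifold-with-boundary hypothesis are sound elaborations of points the paper leaves implicit.
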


\begin{proof}
If $P$ is bipartite, we choose $\nu$ so that $\nu\big|_{\pO_j} = \eta(\Omega_j) \nu_j$ for each $j$. Definition~\ref{def:bi} guarantees this is well defined: if $\Omega_i$ and $\Omega_j$ are neighbors, then $\eta(\Omega_i) \nu_i = \eta(\Omega_j) \nu_j$, since $\eta(\Omega_i) = - \eta(\Omega_j)$ and $\nu_i = - \nu_j$ on $\pO_i \cap \pO_j$. With this choice of $\nu$ we have that $\chi_j = \eta(\Omega_j)$ is constant.

Conversely, if each $\chi_j$ is constant, we define $\eta(\Omega_j) = \chi_j$. To see that this satisfies Definition~\ref{def:bi}, we simply observe that if $\Omega_i$ and $\Omega_j$ are neighbors, then $\nu_i = - \nu_j$ on $\pO_i \cap \pO_j$, and hence $\chi_i = - \chi_j$.
\end{proof}

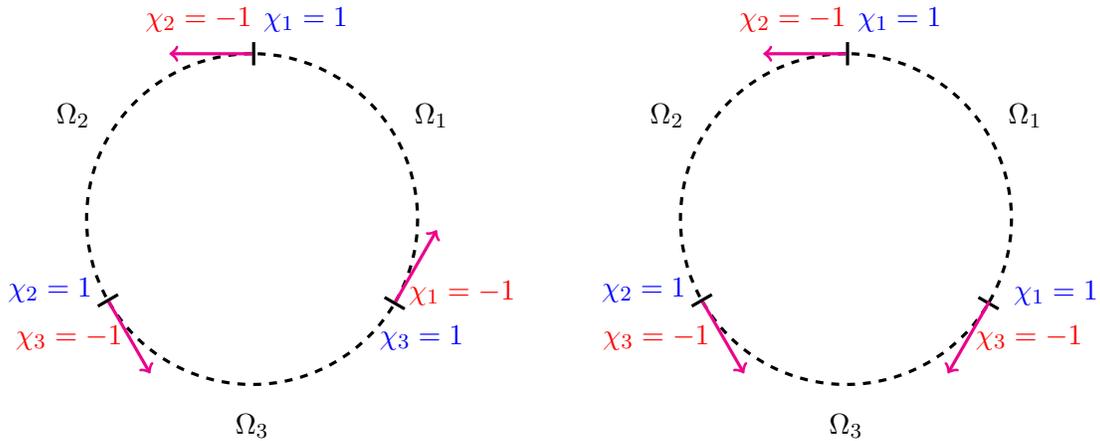
\begin{figure}
\begin{tikzpicture}[ scale=1.1]
	\draw[-|,very thick,dashed] (0,2) arc[radius=2, start angle=90, end angle=210];
	\node at ({2.5*cos(30)},{2.5*sin(30)}) {$\Omega_1$};
	\draw[-|,very thick,dashed] ({2*cos(210)},{2*sin(210)}) arc[radius=2, start angle=210, end angle=330];
	\node at ({2.5*cos(150)},{2.5*sin(150)}) {$\Omega_2$};
	\draw[-|,very thick,dashed] ({2*cos(330)},{2*sin(330)}) arc[radius=2, start angle=330, end angle=450];
	\node at ({2.5*cos(270)},{2.5*sin(270)}) {$\Omega_3$};
	
	\draw[->,very thick,magenta] (0,2) -- ++(180:1); 
	\draw[->,very thick,magenta] ({2*cos(210)},{2*sin(210)}) -- ++(300:1); 
	\draw[->,very thick,magenta] ({2*cos(330)},{2*sin(330)}) -- ++(60:1); 
	
	\node[blue] at ({2.5*cos(75)},{2.5*sin(75)}) {$\chi_1 = 1$};
	\node[red] at ({2.5*cos(105)},{2.5*sin(105)}) {$\chi_2 = -1$};
	\node[blue] at ({2.6*cos(200)},{2.5*sin(200)}) {$\chi_2 = 1$};
	\node[red] at ({2.7*cos(215)},{2.5*sin(215)}) {$\chi_3 = -1$};
	\node[blue] at ({2.5*cos(325)},{2.5*sin(325)}) {$\chi_3 = 1$};
	\node[red] at ({2.7*cos(340)},{2.6*sin(340)}) {$\chi_1 = -1$};
\end{tikzpicture}
\qquad
\begin{tikzpicture}[scale=1.1]
	\draw[-|,very thick,dashed] (0,2) arc[radius=2, start angle=90, end angle=210];
	\node at ({2.5*cos(30)},{2.5*sin(30)}) {$\Omega_1$};
	\draw[-|,very thick,dashed] ({2*cos(210)},{2*sin(210)}) arc[radius=2, start angle=210, end angle=330];
	\node at ({2.5*cos(150)},{2.5*sin(150)}) {$\Omega_2$};
	\draw[-|,very thick,dashed] ({2*cos(330)},{2*sin(330)}) arc[radius=2, start angle=330, end angle=450];
	\node at ({2.5*cos(270)},{2.5*sin(270)}) {$\Omega_3$};
	
	\draw[->,very thick,magenta] (0,2) -- ++(180:1); 
	\draw[->,very thick,magenta] ({2*cos(210)},{2*sin(210)}) -- ++(300:1); 
	\draw[->,very thick,magenta] ({2*cos(330)},{2*sin(330)}) -- ++(240:1); 
	
	\node[blue] at ({2.5*cos(75)},{2.5*sin(75)}) {$\chi_1 = 1$};
	\node[red] at ({2.5*cos(105)},{2.5*sin(105)}) {$\chi_2 = -1$};
	\node[blue] at ({2.6*cos(200)},{2.5*sin(200)}) {$\chi_2 = 1$};
	\node[red] at ({2.8*cos(215)},{2.5*sin(215)}) {$\chi_3 = -1$};
	\node[red] at ({2.7*cos(325)},{2.5*sin(325)}) {$\chi_3 = -1$};
	\node[blue] at ({2.7*cos(340)},{2.6*sin(340)}) {$\chi_1 = 1$};
\end{tikzpicture}
\caption{Two different choices of unit normal $\nu$, and the resulting $\chi_j$, for a 3-partition of the circle. In the left figure none of the $\chi_j$ are constant, i.e. each assumes both values $\pm1$. In the right figure $\chi_1 \equiv 1$ and $\chi_3 \equiv -1$ are constant but $\chi_2$ changes sign. (In this example $M=S^1$ is one-dimensional and $\Sigma$ consists of three points.)}
\label{fig:nu}
\end{figure}

\subsection{The two-sided Dirichlet-to-Neumann map}
\label{sec:DtN}

We now recall the definition of the two-sided Dirichlet-to-Neumann map $\DtnNu$, with $\Dtn$ in \eqref{LambdaS} appearing as a special case. The definition is complicated by the fact that $\lambda_*=\lambda(P)$ is in the Dirichlet spectrum on each nodal domain; in \cite{CJM2} the Dirichlet-to-Neumann map was defined for $\Delta + (\lambda_* + \varepsilon)$ precisely to avoid this difficulty.

However, there are two advantages to working with $\varepsilon=0$ directly: 1) it gives to a stronger result in the case of a multiple eigenvalue, as recently observed in \cite{BCHS}; and 2) it is precisely the operator that shows up in Theorem \ref{thm:Hess1} when we compute the Hessian of $\lambda$.

Throughout this section we assume that $\{\Omega_j\}$ is a generic equipartition with energy $\lambda(P) = \lambda_*$ and we fix a smooth unit normal vector field $\nu$ along $\Sigma$. With $\{\chi_j\}$ as in \eqref{eqn:chis}, we start by defining the closed subspace
\begin{equation}
\label{Schidef}
	\Snu := \Bigg\{f \in L^2(\Sigma) : \int_{\pO_j} \chi_j f \frac{\p\psi_j}{\p\nu_j} = 0 \text{ for all } j \Bigg\}
\end{equation}
of $L^2(\Sigma)$.  We will obtain $\DtnNu$ as the self-adjoint operator corresponding to a closed, semibounded bilinear form on a dense subspace of $\Snu$.

If $f \in H^{1/2}(\Sigma) \cap \Snu$, the boundary value problem 
\begin{equation}
\label{BVP}
	\Delta u_j + \lambda_* u_j = 0 \ \text{ in } \Omega_j, \qquad\qquad u_j\big|_{\pO_j} = \chi_j f,
\end{equation}
has a solution for each $j$; see, for instance \cite[Theorem~4.10]{M00}. Moreover, there exists a unique solution, which we denote $u_j^f$, satisfying the additional constraint $\int_{\Omega_j} u_j^f \psi_j = 0$. We then define the bilinear form
\begin{equation}
\label{adef}
	a(f,g) = \sum_{j=1}^k \int_{\Omega_j} \big( \nabla u_j^f \cdot \nabla u_j^g - \lambda_* u_j^f u_j^g \big),
\end{equation}
with domain $H^{1/2}(\Sigma) \cap \Snu$ dense in $\Snu$. It is easily shown (see \cite{AM12,BCHS}) that 
there are constants $C,c > 0$ and $m \in \bbR$ such that
\begin{equation}
\label{a:bound}
	|a(f,g)| \leq C \|f\|_{H^{1/2}(\Sigma)}  \|g\|_{H^{1/2}(\Sigma)} 
\end{equation}
and
\begin{equation}
\label{a:coer}
	a[f] \geq c \|f\|_{H^{1/2}(\Sigma)}^2 +m \|f\|^2_{L^2(\Sigma)}
\end{equation}
for all $f,g \in \dom(a)$. This means $a$ is closed and semibounded, so it generates a self-adjoint operator, which we denote $\DtnNu$, with $\dom(\DtnNu) \subseteq H^{1/2}(\Sigma) \cap \Snu$.

To characterize the domain of $\DtnNu$, we define the two-sided normal derivative distribution $\nor u^f \in H^{-1/2}(\Sigma)$ by
\begin{equation}
    \nor u^f := E_1\bigg(\chi_1\frac{\p u_1^f}{\p\nu_1}\bigg) + \cdots + E_k\bigg(\chi_k \frac{\p u_k^f}{\p\nu_k}\bigg),
\end{equation}
where $\p u_j^f/\p\nu_j \in H^{-1/2}(\pO_j)$ and 
\begin{equation}\label{Ej}
E_j \colon H^{-1/2}(\pO_j) \to H^{-1/2}(\Sigma)
\end{equation}
denotes the extension by zero. If $u^f$ is sufficiently smooth we will have $\p u_j^f/\p\nu_j \in L^2(\pO_j)$ for each $j$, in which case $\nor u^f$ is a function, given by
\[
\nor u^f\big|_{\pO_i \cap \pO_j} = \chi_i\frac{\p u^f_i}{\p \nu_i} + \chi_j\frac{\p u^f_j}{\p \nu_j}
\]
for $i \neq j$.

It is easily seen that
\begin{equation}
\label{Lambdadomain}
	\dom(\DtnNu) = \big\{f \in H^{1/2}(\Sigma) \cap \Snu : \nor u^f \in L^2(\Sigma) \big\},
\end{equation}
and for any $f \in \dom(\DtnNu)$ we have
\begin{equation}
    \DtnNu f=\Pi_{\Snu}(\nor u^f).
\end{equation}

\begin{rem}
\label{rem:dom2}
If $f \in H^1(\Sigma) \cap \Snu$, then \cite[Theorem~4.24(i)]{M00} implies $\nor u^f \in L^2(\Sigma)$, and we conclude that $H^1(\Sigma) \cap \Snu \subseteq \dom(\DtnNu)$. We do not know if the reverse inclusion holds. This amounts to a transmission regularity problem: if the two-sided normal $\nor u^f$ is contained in $L^2(\Sigma)$, does it follow that $f \in H^1(\Sigma)$? See Lemma \ref{lemma:transmission} for a related result.
\end{rem}

\begin{rem}
\label{rem:Lambda}
If each $\chi_j$ is constant, it follows immediately that $\Snu = \S$ and $\DtnNu=\Dtn$. Therefore, in the bipartite case there exists a choice of $\nu$ for which $\DtnNu=\Dtn$; see Lemma \ref{lemma:bi}.
\end{rem}

\begin{rem}
\label{rem:nu}
If $\nu$ and $\tilde\nu$ are two choices of unit normal along $\Sigma$, the resulting Dirichlet-to-Neumann maps are unitarily equivalent, where the unitary transformation on $L^2(\Sigma)$ is multiplication by $(\nu \cdot \tilde\nu)$. In the bipartite case it follows that $\DtnNu$ is unitarily equivalent to $\Dtn$ for any choice of $\nu$.
\end{rem}

\section{The second variation}
\label{sec:var}
We now compute the second variation of $\lambda$, leading to our explicit formula \eqref{Hess1} relating the Hessian to the Dirichlet-to-Neumann map.

We recall that for each $\Omega_j$, $\psi_j$ denotes the $L^2$-normalized ground state and $\nu_j$ is the outward unit normal. Moreover, we let $H_j = \dv \nu_j$ denote the mean curvature of $\pO_j$. Our sign convention (which gives the sphere positive mean curvature) is irrelevant for the following calculation; all that matters is that
\begin{equation}
\label{mean}
	H_i\big|_{\pO_i \cap \pO_j} = - H_j\big|_{\pO_i \cap \pO_j}
\end{equation}
whenever $\Omega_i$ and $\Omega_j$ are neighbors, since $\nu_i = -\nu_j$ on their common boundary.

We start with a simple lemma that allows us to compare a sum of integrals over $\pO_j$ to a single integral over $\Sigma$. The proof is a direct calculation so we leave it out.

\begin{lemma}
\label{intsum}
If $f_j$ is a measurable function on $\pO_j$ for each $j$, then
\begin{equation}
	\sum_{j=1}^k \int_{\pO_j} f_j = \int_\Sigma F,
\end{equation}
where $F\big|_{\pO_i \cap \pO_j} = f_i\big|_{\pO_j} + f_j\big|_{\pO_i}$
for $i \neq j$.
\end{lemma}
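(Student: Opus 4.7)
The plan is straightforward double-counting: decompose $\Sigma$ into its pairwise pieces $\Sigma_{ij} := \pO_i \cap \pO_j$ (for $i \neq j$), and observe that the left-hand sum picks up each piece exactly twice, once from each of the two adjacent subdomains. This gives the desired equality.

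Concretely, I would first argue that, under the generic hypotheses of Definition~\ref{def:generic}, each $\pO_j$ splits, up to a subset of surface measure zero, as the essentially disjoint union $\pO_j = \bigsqcup_{i \neq j}\Sigma_{ij}$, and similarly $\Sigma = \bigsqcup_{i<j}\Sigma_{ij}$. Indeed, $\Sigma$ is a smoothly embedded hypersurface and the sets of points lying on three or more $\pO_j$'s form a set of codimension at least two, hence of surface measure zero. Granting this, one computes
\begin{equation*}
    \sum_{j=1}^k \int_{\pO_j} f_j
    \;=\; \sum_{j=1}^k \sum_{i \neq j} \int_{\Sigma_{ij}} f_j\big|_{\Sigma_{ij}}
    \;=\; \sum_{i<j} \int_{\Sigma_{ij}} \bigl(f_i\big|_{\Sigma_{ij}} + f_j\big|_{\Sigma_{ij}}\bigr),
\end{equation*}
where the reindexing in the last step uses the fact that $\Sigma_{ij}$ appears in the double sum precisely as a subset of $\pO_i$ (contributing $f_i$) and as a subset of $\pO_j$ (contributing $f_j$).

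Finally, I would match this with the right-hand side: by the definition of $F$ in the lemma, $F|_{\Sigma_{ij}} = f_i|_{\Sigma_{ij}} + f_j|_{\Sigma_{ij}}$, so the decomposition of $\Sigma$ yields $\int_\Sigma F = \sum_{i<j}\int_{\Sigma_{ij}} F|_{\Sigma_{ij}}$, which coincides with the expression displayed above.

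There is no real obstacle here; the only point requiring even minimal care is the measure-theoretic justification that higher-order intersections are negligible, which is automatic from the smoothness of $\Sigma$ built into Definition~\ref{def:generic}. Everything else is bookkeeping.
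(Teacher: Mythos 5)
Your proof is correct, and it is exactly the ``direct calculation'' that the paper declines to write out: decompose each $\pO_j$ and $\Sigma$ into the essentially disjoint pieces $\Sigma_{ij}=\pO_i\cap\pO_j$, reindex the double sum over unordered pairs, and match with the definition of $F$. The only cosmetic remark is that for a generic partition the triple intersections are in fact empty (near any $p\in\Sigma$ the smooth hypersurface separates $M$ locally into two components, so at most two subdomains can meet there), which is even stronger than the measure-zero claim you invoke.
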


The Hessian of $\lambda$ in the $\phi\nu$ direction can be computed as
\begin{equation}
	\Hess \lambda(P)[\phi\nu] = \frac{d^2}{dt^2} \lambda(\varphi_t(P)) \Big|_{t=0},
\end{equation}
where $\varphi_t$ is any one-parameter family of diffeomorphisms of $M$ with $\varphi_0 = \id$, $\varphi_t'\big|_{\Sigma,t=0} = \phi\nu$ and $\varphi_t(P) \in \cEp$ for all $t$. 
(We can not assume that $\varphi_t$ is the one-parameter group generated by the vector field $(E^s \phi)\tilde\nu$ on $M$, as described in Section~\ref{sec:manifold}, since there is no guarantee that this flow will preserve the space of equipartitions.) 

We start by differentiating $\lambda_1(\varphi_t(\Omega_j))$ on the $j$th subdomain. From \cite[eq.~(151)]{G10} we have
\begin{equation}
\label{Grinfeld}
	\frac{d^2}{dt^2} \lambda_1(\varphi_t(\Omega_j)) \Big|_{t=0} = \int_{\pO_j} \left( \left(H_j C_j^2 -C_j' \right)\left(\frac{\p\psi_j}{\p\nu_j}\right)^{\!2} + 2 w_j \frac{\p w_j}{\p \nu_j} \right),
\end{equation}
where $C_j$ and $C_j'$ denote the normal velocity of the flow and its $t$ derivative, evaluated at $t=0$, and $w_j$ is the unique solution to
\begin{equation}
  \label{eq:def_wj}
	\Delta w_j + \lambda_1(\Omega_j) w_j = 0, \qquad w_j\big|_{\pO_j} = - C_j \frac{\p \psi_j}{\p \nu_j}, \qquad
	\int_{\Omega_j} w_j \psi_j = 0.
\end{equation}
The normal velocity at $t=0$  is given by $C_j = (\phi\nu) \cdot \nu_j = \chi_j \phi$. The precise value of the derivative $C'_j$ is irrelevant; it only matters that it is an odd function, in the sense that
\begin{equation}
\label{Cjodd}
	C'_i\big|_{\pO_i \cap \pO_j} = - C'_j\big|_{\pO_i \cap \pO_j}
\end{equation}
whenever $\Omega_i$ and $\Omega_j$ are neighbors. This follows from the observation that the normal velocity is odd for all $t$, since $\nu_i = -\nu_j$ on the common boundary on $\Omega_i$ and $\Omega_j$, and likewise for their deformations $\varphi_t(\Omega_i)$ and $\varphi_t(\Omega_j)$.

By the equipartition condition we have $\lambda(\varphi_t(P)) = \lambda_1(\varphi_t(\Omega_j))$ for each $j$.  For $a_1,\dots,a_k$ as in \eqref{eq:normals}, using our assumption that $a_1^2 + \cdots + a_k^2 = 1$, we can write
\[
    \lambda(\varphi_t(P)) = \sum_{j=1}^k a_j^2 \lambda_1(\varphi_t(\Omega_j)),
\]
and hence
\begin{equation*}
	\frac{d^2}{dt^2} \lambda(\varphi_t(P)) \Big|_{t=0} = \sum_{j=1}^k a_j^2 \frac{d^2}{dt^2} \lambda_1(\varphi_t(\Omega_j)) \Big|_{t=0}.
\end{equation*}
Using \eqref{Grinfeld} to evaluate each term on the right-hand side, we get
\begin{equation}
\label{2var}
	\frac{d^2}{dt^2} \lambda(\varphi_t(P)) \Big|_{t=0} 
	= \sum_{j=1}^k a_j^2 \int_{\pO_j} \left( \left(H_j C_j^2 -C_j' \right)\left(\frac{\p\psi_j}{\p\nu_j}\right)^{\!2} + 2 w_j \frac{\p w_j}{\p \nu_j} \right).
\end{equation}

Next, we use Lemma \ref{intsum} to conclude that
\[
	\sum_{j=1}^k a_j^2 \int_{\pO_j} \left(H_j C_j^2 -C_j' \right)\left(\frac{\p\psi_j}{\p\nu_j}\right)^{\!2} = \int_\Sigma F = 0,
\]
because
\begin{equation}
	F\big|_{\pO_i \cap \pO_j} = \left(H_iC_i^2 - C_i' \right) \left(a_i \frac{\p \psi_i}{\p \nu_i}\right)^{\!2}
	+ \left(H_j C_j^2 - C_j'\right) \left(a_j \frac{\p \psi_j}{\p \nu_j}\right)^{\!2} = 0
\end{equation}
for all $i \neq j$, on account of \eqref{eq:normals}, \eqref{mean} and \eqref{Cjodd}. Substituting this into \eqref{2var} and then integrating by parts, using \eqref{eq:def_wj}, yields
\begin{align*}
	\frac{d^2}{dt^2} \lambda(\varphi_t(P)) \Big|_{t=0} 
	&= 2 \sum_{j=1}^k a_j^2 \int_{\pO_j}  w_j \frac{\p w_j}{\p \nu_j} \\
	&= 2 \sum_{j=1}^k \int_{\Omega_j} \big( |a_j \nabla w_j|^2 - \lambda_* (a_j w_j)^2\big).
\end{align*}

Finally, recalling the definition of $\rho$ in \eqref{rhodef}, we note that
\[
	a_j w_j\big|_{\pO_j} = -\chi_j  a_j \frac{\p \psi_j}{\p \nu_j} \phi = \pm \chi_j \rho \phi,
\]
where the $\pm$ sign is consistent over the entire boundary of $\pO_j$. This means for each $j$ the function $u_j := a_j w_j$ satisfies the boundary value problem
\[
   \Delta u_j + \lambda_* u_j = 0 \ \text{ in } \Omega_j, \qquad\qquad u_j\big|_{\pO_j} = \pm\chi_j \rho\phi,
\]
and so
\[
	\Hess \lambda(P)[\phi\nu] = 2 \sum_{j=1}^k \int_{\Omega_j} \big( |\nabla u_j|^2 - \lambda_* u_j^2\big)
	= 2a(\rho\phi,\rho\phi),
\]
where $a$ is the bilinear form that generates $\DtnNu$, as in \eqref{adef}. This completes the proof of \eqref{Hess1}.

\section{Closing the Hessian}
\label{sec:close}

Having computed the Hessian of $\lambda$, we are now ready to prove our main results.

\begin{proof}[Proof of Theorem \ref{thm:Hess1}]

From \eqref{Hess1} we have
\begin{equation}
\label{Hess2}
	h(\phi_1,\phi_2) = \Hess \lambda(P)(\phi_1\nu,\phi_2\nu) 
	= 2a(\rho\phi_1,\rho\phi_2)
\end{equation}
for all $\phi_1,\phi_2 \in  H^s_\rho(\Sigma) \cap \Fnu$. We then define a form $\bar h(\phi_1,\phi_2) = 2a(\rho\phi_1, \rho\phi_2)$ with 
$$
\dom(\bar h) = H^{1/2}_\rho(\Sigma) \cap \Fnu = \{\phi : \rho\phi \in \dom(a)\}.
$$
It is clear that $\dom(\bar h)$ is dense in $\Fnu$. Using \eqref{a:bound} and \eqref{a:coer}, we conclude that $\bar h$ is closed and semibounded, and hence generates a self-adjoint operator, which we denote $\cH$. Moreover, using the fact that $\phi \in \Fnu$ if and only if $\rho\phi \in \Snu$, we find that
\[
    \dom(\cH) = \big\{\phi : \rho\phi \in \dom(\DtnNu) \big\},
\]
and $\cH\phi = 2 \rho^{-1}\DtnNu(\rho\phi)$. Finally, using the fact that 
\[
    H^1(\Sigma) \cap \Snu \subseteq \dom(\DtnNu) \subseteq  H^{1/2}(\Sigma) \cap \Snu,
\]
we obtain \eqref{eq:domain}, completing the proof.
\end{proof}

Corollary \ref{cor:Morse} follows immediately from Theorem~\ref{thm:Hess1}. To prove Theorem~\ref{thm:smooth}, we will show that the eigenfunctions of $\cH$ are smooth, and hence are contained in the domain of $\Hess \lambda(P)$. The main ingredient in the proof is the following transmission regularity result.

\begin{lemma}\cite[Theorem~4.20]{M00}
\label{lemma:transmission}
Suppose $\Omega_i$ and $\Omega_j$ are neighbors.
If $u_i \in H^1(\Omega_i)$ and $u_j \in H^1(\Omega_j)$ satisfy $\Delta u_i \in H^r(\Omega_i)$, $\Delta u_j \in H^r(\Omega_j)$, 
\[
	u_i\big|_{\pO_i \cap \pO_j} - u_j\big|_{\pO_i \cap \pO_j} \in H^{r+3/2}(\pO_i \cap \pO_j)
\]
and
\[
	\frac{\p u_i}{\p \nu_i} + \frac{\p u_j}{\p \nu_j} \in H^{r+1/2}(\pO_i \cap \pO_j)
\]
for some $r \geq 0$, then $u_i \in H^{r+2}(\Omega_i)$ and $u_j \in H^{r+2}(\Omega_j)$.
\end{lemma}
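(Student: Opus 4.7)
The plan is to reduce the transmission problem to an interior elliptic regularity problem on a two-sided open neighborhood of the common boundary $\Gamma := \pO_i \cap \pO_j$, by absorbing the Dirichlet and Neumann jumps across $\Gamma$ into explicit correctors. Once this reduction is done, the piecewise-defined remainder will be a weak solution of an elliptic equation on an honest open set with right-hand side in $H^r$, and classical interior elliptic regularity will deliver the gain of two Sobolev derivatives.

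First, I would use standard trace and lifting theorems in $H^{r+2}$ to build correctors $v_i \in H^{r+2}(\Omega_i)$ and $v_j \in H^{r+2}(\Omega_j)$ whose Dirichlet and Neumann traces on $\Gamma$ (localized away from $\p \Gamma$ by a cutoff, since $\Gamma$ is itself a manifold with boundary) are prescribed so that
\[
    (u_i - v_i)\big|_\Gamma = (u_j - v_j)\big|_\Gamma
    \quad\text{and}\quad
    \frac{\p(u_i-v_i)}{\p\nu_i} + \frac{\p(u_j-v_j)}{\p\nu_j} = 0 \ \text{ on } \Gamma.
\]
The assumed jump regularity is precisely what is needed, since the trace map $H^{r+2}(\Omega) \to H^{r+3/2}(\pO) \times H^{r+1/2}(\pO)$ is surjective onto pairs of Dirichlet and Neumann data. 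One convenient choice is to pack the entire mismatch into $v_i$ and take $v_j \equiv 0$; the point is only that the prescribed data on $\Gamma$ match.

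Second, I would set $\tilde u_i := u_i - v_i$ and $\tilde u_j := u_j - v_j$, and glue them into a single function $\tilde u$ defined on an open neighborhood $U$ of $\Gamma$ in $M$. The matched Dirichlet traces give $\tilde u \in H^1(U)$, while the vanishing Neumann jump ensures that the distributional Laplacian of $\tilde u$ on $U$ has no surface contribution supported on $\Gamma$, so $\Delta \tilde u$ agrees with the piecewise Laplacian and therefore belongs to $H^r(U)$. Classical interior elliptic regularity (applied with a cutoff supported in $U$) then yields $\tilde u \in H^{r+2}$ near $\Gamma$, and hence $u_i = \tilde u_i + v_i \in H^{r+2}$ near $\Gamma$, with the analogous conclusion for $u_j$. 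Away from $\Gamma$, standard interior elliptic regularity applied directly to $u_i$ on compact subsets of $\Omega_i$ yields the same conclusion, and a partition of unity combines the two pieces.

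The main obstacle, and the only truly delicate step, is the construction of the correctors. The set $\Gamma$ is in general only a proper piece of $\pO_i$ (it is a manifold with boundary inside $\pO_i$), so the trace data cannot be lifted globally without first being cut off near $\p \Gamma$; one must verify that such a cutoff can be performed without disturbing the hypothesized $H^{r+3/2}$ and $H^{r+1/2}$ regularity, which requires that the multiplication by a smooth cutoff be bounded on the relevant Sobolev spaces on $\Gamma$. Once this localization and the lifts are in hand, the interesting content of transmission regularity collapses to the familiar observation that continuity together with the zero Neumann jump kills the singular surface contributions to $\Delta \tilde u$, reducing everything to interior elliptic theory.
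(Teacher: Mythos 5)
This lemma is not proved in the paper at all: it is quoted verbatim (up to notation) from McLean's book, \cite[Theorem~4.20]{M00}, whose proof runs via boundary flattening and tangential difference quotients. So your proposal should be judged on its own merits, and it contains a genuine gap. The corrector step is fine and is indeed the standard first reduction (use the surjectivity of the two-trace map $H^{r+2}(\Omega_i)\to H^{r+3/2}(\Gamma)\times H^{r+1/2}(\Gamma)$ to absorb both jumps into a single $v_i$, which is legitimate because the hypotheses give exactly the required regularity of the \emph{jumps}, not of the individual traces). The problem is the sentence ``$\Delta\tilde u$ agrees with the piecewise Laplacian and therefore belongs to $H^r(U)$.'' The first half is correct (the matched Dirichlet traces kill the double-layer term and the vanishing Neumann jump kills the single-layer term), but the conclusion is false for $r\geq 1/2$: a function of the form $f_i\mathbf{1}_{\Omega_i}+f_j\mathbf{1}_{\Omega_j}$ with $f_i\in H^r(\Omega_i)$, $f_j\in H^r(\Omega_j)$ lies in $H^s(U)$ only for $s<1/2$ unless the traces of $f_i$ and $f_j$ happen to match along $\Gamma$ to the appropriate order. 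Nothing in the hypotheses forces this; indeed in the very application made in the paper one has $r=1$, $\Delta u_j=-\lambda_* u_j$, and the boundary values $u_i|_\Gamma=\pm u_j|_\Gamma$ may carry opposite signs, so the glued right-hand side is genuinely discontinuous across $\Gamma$ (and the corrector $v_i$ contributes a one-sided $\Delta v_i$ in any case). Your argument therefore proves only the case $r=0$, i.e.\ $u_i\in H^2$, and cannot be bootstrapped by re-running interior regularity on the glued function.

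The repair is the classical one: establish $r=0$ exactly as you do, then proceed by induction on $r$ \emph{without} re-gluing the right-hand sides. After flattening $\Gamma$, tangential derivatives $\p_{x'}\tilde u_i,\p_{x'}\tilde u_j$ satisfy a transmission problem of the same type with data one order less regular, so the inductive hypothesis (or difference quotients, to avoid assuming the derivatives exist) gives tangential regularity; the missing normal derivatives are then recovered on \emph{each side separately} from the equation $\p_n^2 u = \Delta u - (\text{tangential terms})$, which only requires $\Delta u_i\in H^r(\Omega_i)$ one side at a time. This is precisely the structure of McLean's proof of Theorem~4.20. Your localization remarks about $\p\Gamma$ are also moot in the setting of the paper, since for a generic partition each $\pO_i\cap\pO_j$ is a union of closed components of the smooth hypersurface $\Sigma$, but note that, as stated, the global conclusion $u_i\in H^{r+2}(\Omega_i)$ requires the hypotheses to hold on all of $\pO_i$, i.e.\ on every interface piece, not just the single one named in the lemma.
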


\begin{proof}[Proof of Theorem \ref{thm:smooth}]
Since \eqref{Morseleq} always holds, we just need to prove the reverse inequality,
\begin{equation}
\label{eq:HH}
	n_-\big(\Hess \lambda(P)\big) \geq n_-(\cH).
\end{equation}
Let $m = n_-(\cH)$, and denote by $\phi_1, \ldots, \phi_m \in \dom(\cH)$ the first $m$ eigenfunctions of $\cH$. To prove \eqref{eq:HH} it suffices to show that 
\[
	\phi_i \in \dom\big(\Hess \lambda(P)\big) = H^s_\rho(\Sigma) \cap \Fnu
\]
for $i = 1, \ldots, m$, since this implies that $\Hess \lambda(P)$ is negative definite on $\operatorname{span}\{\phi_1, \ldots, \phi_m\}$ and hence $n_-\big(\Hess \lambda(P)\big) \geq m$. In fact, we will prove that every eigenfunction of $\cH$ is in $C^\infty(\Sigma)$, and hence is contained in $H^s(\Sigma)$ regardless of the choice of $s$.

Therefore, let $\phi$ be an eigenfunction for $\cH$. It follows from Corollary \ref{cor:Morse} that $f = \rho \phi$ is an eigenfunction for $\DtnNu$. We let $\mu$ denote the corresponding  eigenvalue. Fix $i \neq j$ with $\pO_i \cap \pO_j \neq \varnothing$, and let $c_i,c_j \in \{\pm1\}$ denote the constants $c_i := \chi_i\big|_{\pO_i \cap \pO_j}$ and $c_j := \chi_i\big|_{\pO_i \cap \pO_j}$. We then have functions $u_i \in H^1(\Omega_i)$ and $u_j \in H^1(\Omega_j)$ such that $\Delta u_i + \lambda_* u_i = 0$, $\Delta u_j + \lambda_* u_j = 0$,
\[
    c_i u_i\big|_{\pO_i \cap \pO_j} = c_j u_j\big|_{\pO_i \cap \pO_j} = f,
\]
and
\[
	c_i \frac{\p u_i}{\p \nu_i} + c_j\frac{\p u_j}{\p \nu_j} = \DtnNu f = \mu f \in H^{1/2}(\pO_i \cap \pO_j).
\]
It follows from Lemma \ref{lemma:transmission} with $r=0$ that $c_i u_i \in H^2(\Omega_i)$, and hence $f = c_i u_i\big|_{\pO_i \cap \pO_j} \in H^{3/2}(\pO_i \cap \pO_j)$. This implies
\[
	c_i \frac{\p u_i}{\p \nu_i} + c_j \frac{\p u_j}{\p \nu_j} = \mu f \in H^{3/2}(\pO_i \cap \pO_j),
\]
so we can apply Lemma \ref{lemma:transmission} with $r=1$ to obtain $c_iu_i \in H^3(\Omega_i)$. Proceeding inductively, we find that $f$ is smooth. Since $\rho$ is smooth and nowhere vanishing, it follows that $\phi = \rho^{-1}f$ is smooth, as was to be shown.
\end{proof}

Corollary \ref{cor:bipartite} is now an immediate consequence of Theorem \ref{thm:smooth}, Lemma \ref{lemma:bi} and Remark \ref{rem:Lambda}.

\section{Example: the \texorpdfstring{$(3,1)$}{(3,1)} mode on the square}
\label{sec:example}

We conclude by studying the nodal partition generated by $\psi_*(x,y) = \sin(3\pi x) \sin(\pi y)$ on the unit square, with Dirichlet boundary conditions. We refer to $\psi_*$ as the $(3,1)$ mode, and its nodal set as the $(3,1)$ nodal set. Similarly, the $(1,3)$ mode refers to the eigenfunction $\sin(\pi x) \sin(3\pi y)$ with the same eigenvalue.
While this example does not strictly satisfy the requirements of Theorem~\ref{thm:Hess1}, which for simplicity was only formulated on manifolds without boundary, it can be shown that the theorem remains valid in this case, as will be described in \cite{BCCKM}.

This means we can use \eqref{eigenfunction} to relate eigenfunctions of the two-sided Dirichlet-to-Neumann map $\DtnNu$ to eigenfunctions of the self-adjoint operator $\cH$ generated by $\Hess \lambda(P)$. This is useful because the Dirichlet-to-Neumann eigenfunctions can be computed explicitly in this case, and by taking the eigenfunction corresponding to the most negative eigenvalue, we obtain the direction of steepest descent for the equipartition energy $\lambda$. In Figure \ref{fig:31b} we plot the resulting deformation of the $(3,1)$ nodal partition, and observe that it is moving towards the conjectured minimal 3-partition of the square, which was investigated numerically in \cite{bonnaillie2010numerical}.

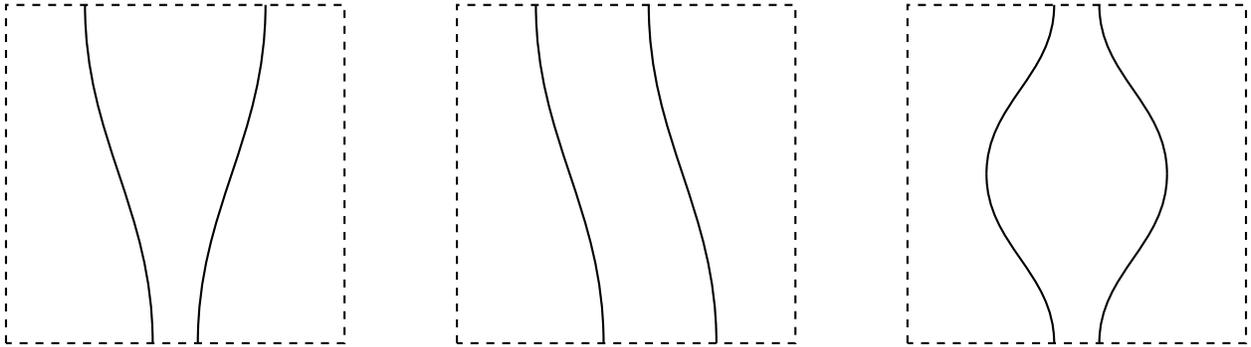
\begin{figure}
\begin{center}
	\begin{tikzpicture}[scale=1.5]
		\draw[thick,dashed] (0,0) -- (0,3) -- (3,3) -- (3,0) -- (0,0);
	\draw[thick] (0.7,3) to [out=270, in=90] (1.3,0);
	\draw[thick] (2.3,3) to [out=270, in=90] (1.7,0);
	\end{tikzpicture}
\hfill
	\begin{tikzpicture}[scale=1.5]
		\draw[thick,dashed] (0,0) -- (0,3) -- (3,3) -- (3,0) -- (0,0);
	\draw[thick] (0.7,3) to [out=270, in=90] (1.3,0);
	\draw[thick] (1.7,3) to [out=270, in=90] (2.3,0);

	\end{tikzpicture}
\hfill
	\begin{tikzpicture}[scale=1.5]
		\draw[thick,dashed] (0,0) -- (0,3) -- (3,3) -- (3,0) -- (0,0);
	\draw[thick] (1.3,3) to [out=270, in=90] (0.7,1.5);
	\draw[thick] (0.7,1.5) to [out=270, in=90] (1.3,0);
\draw[thick] (1.7,3) to [out=270, in=90] (2.3,1.5);
	\draw[thick] (2.3,1.5) to [out=270, in=90] (1.7,0);
	\end{tikzpicture}
\end{center}
\caption{Deformations of the $(3,1)$ nodal set along the Hessian eigenfunctions $\phi_1$, $\phi_2$ and $\phi_3$ (pictured from left to right). The associated eigenvalues are negative for $\phi_1$ and $\phi_2$ and zero for $\phi_3$, which corresponds to deformation along the $(1,3)$ mode.}
\label{fig:31a}
\end{figure}

The nodal set of $\psi_*$ is $\Sigma = \{1/3, 2/3\} \times [0,1]$. We choose $\nu$ so that $\nu\big|_{x=1/3} = (1,0)$ and $\nu\big|_{x= 2/3} = (-1,0)$, hence $\chi_1 = \chi_3 = 1$ and $\chi_2 = -1$. In this case the subspace $\Snu$ defined in \eqref{Schidef} coincides with
\begin{equation}
	\S = \left\{ f \in L^2(\Sigma) : \int_0^1 f\big(\tfrac13,y\big) \sin(\pi y)\, dy 
	= \int_0^1 f\big(\tfrac23,y\big) \sin (\pi y) \, dy = 0 \right\},
\end{equation}
and the weight is $\rho(x,y) =\frac{1}{\sqrt3}  \sin \pi y$.

Separating variables, one finds that $\sigma$ is an eigenvalue of $\Dtn$ if there exists $u(x,y) = g(x)h(y)$ satisfying $\Delta u + \lambda_{3,1} u = 0$ in $\Omega \setminus \Sigma$, with the boundary conditions $g(0) = g(1) = h(0) = h(1) = 0$, the continuity conditions $g\big(\tfrac13+) = g\big(\tfrac13-)$ and $g\big(\tfrac23+) = g\big(\tfrac23-)$, and the jump conditions
\begin{equation}
    g'\big(\tfrac13+) - g'\big(\tfrac13-) = \sigma g\big(\tfrac13), \qquad
    g'\big(\tfrac23+) - g'\big(\tfrac23-) = \sigma g\big(\tfrac23).
\end{equation}
The first two eigenfunctions have $h(y) = \sin(2\pi y)$. It can be shown that the $g(x)$ giving the most negative value of $\sigma$ is even with respect to $x=1/2$, so $g\big(\tfrac13) = g\big(\tfrac23)$, and the corresponding eigenfunction of $\Dtn$, denoted $f_1$, is thus given by
\begin{equation}
    f_1\big(\tfrac13,y\big) = f_1\big(\tfrac23,y\big) = \sin(2\pi y).
\end{equation}
Similarly, the second eigenvalue corresponds to $g(x)$ that is odd with respect to $x=1/2$, hence
\begin{equation}
    f_2\big(\tfrac13,y\big) = -f_2\big(\tfrac23,y\big) = \sin(2\pi y).
\end{equation}
Finally, the third eigenvalue of $\Dtn$, which is zero, has $g(x) = \sin(\pi y)$ and $h(y) = \sin(3\pi y)$, hence
\begin{equation}
    f_3\big(\tfrac13,y\big) =  f_3\big(\tfrac23,y\big) = \sin(3\pi y).
\end{equation}
These formulas for the first three eigenfunctions can also be obtained using the spectral flow method from \cite{BCM19}; we do not elaborate on this here, but refer the reader to \cite{beck2021limiting}, where a similar computation is carried out in detail.

Using \eqref{eigenfunction}, we therefore obtain (up to an overall normalization) the $\mathcal H_{\scriptscriptstyle P}$ eigenfunctions
\begin{align}
	\phi_1\big(\tfrac13,y\big) = \ & \phi_1\big(\tfrac23,y\big) = \frac{\sin(2\pi y)}{\sin(\pi y)}, \\
	\phi_2\big(\tfrac13,y\big) = -&\phi_2\big(\tfrac23,y\big) = \frac{\sin(2\pi y)}{\sin(\pi y)}, \\
	\phi_3\big(\tfrac13,y\big) = \ & \phi_3\big(\tfrac23,y\big) =  \frac{\sin(3\pi y)}{\sin(\pi y)}.
\end{align}
The deformations of the nodal partition $P$ along the vector fields $\phi_1 \nu$, $\phi_2\nu$ and $\phi_3\nu$ are illustrated in Figure \ref{fig:31a}, from left to right.

The appearance of the eigenfunction $\phi_3$ in the kernel of $\mathcal H_{\scriptscriptstyle P}$ is easily understood. For any $t$, $\psi_t(x,y) = \sin(3\pi x) \sin(\pi y) + t \sin(\pi x) \sin(3\pi y)$ is a Laplacian eigenfunction, with eigenvalue $\lambda_{3,1}$ independent of $t$. Letting $P_t$ denote the corresponding nodal partition, we have that $\lambda(P_t)$ is constant in $t$, hence $\Hess \lambda(P_0)(\phi\nu,Y) = 0$ for any normal vector field $Y$ along $\Sigma$, where $\phi\nu$ is the infinitessimal generator of the family $P_t$. Recalling that the normal derivative $\nu \cdot\nabla$ is $\p/\p x$ at $x=1/3$ and $-\p/\p x$ at $x=2/3$, we find that
\begin{equation}
	\phi(x,y) = - \frac{\sin(\pi x) \sin(3\pi y)}{\nu \cdot \nabla(\sin(3\pi x) \sin(\pi y))}\bigg|_{x=1/3,2/3} = \frac{\sqrt3}{6\pi} \frac{\sin (3\pi y)}{\sin (\pi y)},
\end{equation}
is proportional to $\phi_3$, as expected.

On the other hand, the eigenfunction $\phi_1$ corresponds to the most negative eigenvalue of $\Dtn$, and so $\phi_1\nu$ gives the direction of steepest descent for the equipartition energy $\lambda$. The deformation of the nodal partition $P$ along this direction is shown in Figure \ref{fig:31b}. 
The left panel shows the original partition, and the middle panel shows its deformation by $\phi_1\nu$, which pushes apart the nodal lines for $y > \frac12$ and brings them closer together for $y<\frac12$.  The far right panel is an illustration of the conjectured minimal partition, which was computed numerically in \cite{bonnaillie2010numerical}.

These figures suggest that the gradient flow of $\lambda$, with respect to a suitable Riemannian structure on the manifold $\cEp$, will asymptotically approach the conjectured minimum. However, the initial partition and the conjectured minimum have different topology\,---\,the former is smooth and bipartite while the latter is not\,---\,and so the resolution of this problem will require a more detailed study of the space of general (i.e.\ non-generic) equipartitions. This structure will be investigated in a future work \cite{BCCKM}.

\begin{figure}
\begin{center}
	\begin{tikzpicture}[scale=1.5]
		\draw[thick,dashed] (0,0) -- (0,3) -- (3,3) -- (3,0) -- (0,0);
		\draw[thick] (1,0) -- (1,3);
		\draw[thick] (2,0) -- (2,3);
	\end{tikzpicture}
\hfill
	\begin{tikzpicture}[scale=1.5]
		\draw[thick,dashed] (0,0) -- (0,3) -- (3,3) -- (3,0) -- (0,0);
		\draw[thick] (0.7,3) to [out=270, in=90] (1.3,0);
		\draw[thick] (2.3,3) to [out=270, in=90] (1.7,0);
	\end{tikzpicture}
\hfill
	\begin{tikzpicture}[scale=1.5]
		\draw[thick,dashed] (0,0) -- (0,3) -- (3,3) -- (3,0) -- (0,0);
		\draw[thick] (1.5,0) -- (1.5,1.3);
		\draw[thick] (1.5,1.3) to [out=60, in=180] (3,2);
		\draw[thick] (1.5,1.3) to [out=120, in=0] (0,2);
	\end{tikzpicture}
\end{center}
\caption{From left to right: the $(3,1)$ nodal set, its deformation along $\phi_1$ (the direction of steepest descent), and the conjectured minimal 3-partition. }
\label{fig:31b}
\end{figure}
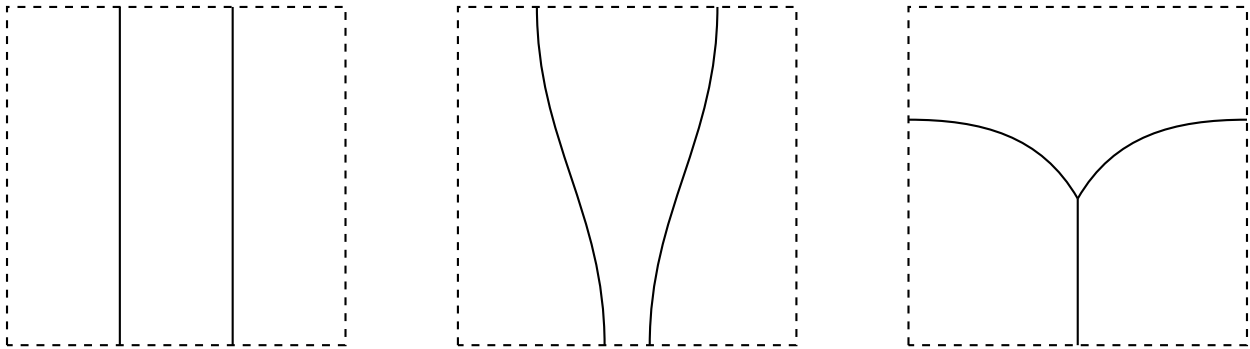

\bibliographystyle{plain}
\bibliography{nodal}

\def\cprime{$'$}
\begin{thebibliography}{10}

\bibitem{AM12}
Wolfgang Arendt and Rafe Mazzeo.
\newblock Friedlander's eigenvalue inequalities and the
  {D}irichlet-to-{N}eumann semigroup.
\newblock {\em Commun. Pure Appl. Anal.}, 11(6):2201--2212, 2012.

\bibitem{BBRS12}
Ram Band, Gregory Berkolaiko, Hillel Raz, and Uzy Smilansky.
\newblock The number of nodal domains on quantum graphs as a stability index of
  graph partitions.
\newblock {\em Comm. Math. Phys.}, 311(3):815--838, 2012.

\bibitem{beck2021limiting}
Thomas Beck, Isabel Bors, Grace Conte, Graham Cox, and Jeremy~L Marzuola.
\newblock Limiting eigenfunctions of {S}turm--{L}iouville operators subject to
  a spectral flow.
\newblock {\em Annales math{\'e}matiques du Qu{\'e}bec}, 45(2):249--269, 2021.

\bibitem{berard2016courant}
Pierre B{\'e}rard and Bernard Helffer.
\newblock Courant-sharp eigenvalues for the equilateral torus, and for the
  equilateral triangle.
\newblock {\em Letters in Mathematical Physics}, 106(12):1729--1789, 2016.

\bibitem{berard2020courant}
Pierre B\'{e}rard, Bernard Helffer, and Rola Kiwan.
\newblock Courant-sharp property for {D}irichlet eigenfunctions on the
  {M}\"{o}bius strip.
\newblock {\em Port. Math.}, 78(1):1--41, 2021.

\bibitem{BCCKM}
Gregory Berkolaiko, Yaiza Canzani, Graham Cox, Peter Kuchment, and Jeremy~L.
  Marzuola.
\newblock Stability of spectral partitions with corners (working title).
\newblock {\em in preparation}.

\bibitem{BCCM}
Gregory Berkolaiko, Yaiza Canzani, Graham Cox, and Jeremy~L. Marzuola.
\newblock A local test for global extrema in the dispersion relation of a
  periodic graph.
\newblock {\em arXiv:2004.12931}, 2021.

\bibitem{BCHS}
Gregory Berkolaiko, Graham Cox, Bernard Helffer, and Mikael~Persson Sundqvist.
\newblock Computing nodal deficiency with a refined {D}irichlet-to-{N}eumann
  map.
\newblock {\em arXiv:2201.06667}, 2022.

\bibitem{BCM19}
Gregory Berkolaiko, Graham Cox, and Jeremy~L. Marzuola.
\newblock Nodal deficiency, spectral flow, and the {D}irichlet-to-{N}eumann
  map.
\newblock {\em Lett. Math. Phys.}, 109(7):1611--1623, 2019.

\bibitem{BKS12}
Gregory Berkolaiko, Peter Kuchment, and Uzy Smilansky.
\newblock Critical partitions and nodal deficiency of billiard eigenfunctions.
\newblock {\em Geom. Funct. Anal.}, 22(6):1517--1540, 2012.

\bibitem{BRS12}
Gregory Berkolaiko, Hillel Raz, and Uzy Smilansky.
\newblock Stability of nodal structures in graph eigenfunctions and its
  relation to the nodal domain count.
\newblock {\em J. Phys. A}, 45(16):165203, 16, 2012.

\bibitem{bonnaillie2015nodal}
Virginie Bonnaillie-No\"{e}l and Bernard Helffer.
\newblock Nodal and spectral minimal partitions---the state of the art in 2016.
\newblock In {\em Shape optimization and spectral theory}, pages 353--397. De
  Gruyter Open, Warsaw, 2017.

\bibitem{bonnaillie2010numerical}
Virginie Bonnaillie-No{\"e}l, Bernard Helffer, and Gregory Vial.
\newblock Numerical simulations for nodal domains and spectral minimal
  partitions.
\newblock {\em ESAIM: Control, Optimisation and Calculus of Variations},
  16(1):221--246, 2010.

\bibitem{CJM2}
Graham Cox, Christopher~K.R.T. Jones, and Jeremy~L. Marzuola.
\newblock Manifold decompositions and indices of {S}chr\"{o}dinger operators.
\newblock {\em Indiana Univ. Math. J.}, 66:1573--1602, 2017.

\bibitem{EM70}
David~G. Ebin and Jerrold Marsden.
\newblock Groups of diffeomorphisms and the motion of an incompressible fluid.
\newblock {\em Ann. of Math. (2)}, 92:102--163, 1970.

\bibitem{G10}
P.~Grinfeld.
\newblock Hadamard's formula inside and out.
\newblock {\em J. Optim. Theory Appl.}, 146(3):654--690, 2010.

\bibitem{helffer2009nodal}
B.~Helffer, T.~Hoffmann-Ostenhof, and S.~Terracini.
\newblock Nodal domains and spectral minimal partitions.
\newblock {\em Ann. Inst. H. Poincar\'{e} Anal. Non Lin\'{e}aire},
  26(1):101--138, 2009.

\bibitem{helffer2010spectral}
Bernard Helffer, Thomas Hoffmann-Ostenhof, and Susanna Terracini.
\newblock On spectral minimal partitions: the case of the sphere.
\newblock In {\em Around the Research of Vladimir Maz'ya III}, pages 153--178.
  Springer, 2010.

\bibitem{helffer2016nodal}
Bernard Helffer and Mikael Sundqvist.
\newblock On nodal domains in {E}uclidean balls.
\newblock {\em Proceedings of the American Mathematical Society},
  144(11):4777--4791, 2016.

\bibitem{helffer2021spectral}
Bernard Helffer and Mikael~Persson Sundqvist.
\newblock Spectral flow for pair compatible equipartitions.
\newblock {\em Communications in Partial Differential Equations}, pages 1--28,
  2021.

\bibitem{HK21}
Matthias Hofmann and James~B. Kennedy.
\newblock Interlacing and {F}riedlander-type inequalities for spectral minimal
  partitions of metric graphs.
\newblock {\em Lett. Math. Phys.}, 111(4):Paper No. 96, 30, 2021.

\bibitem{HKMP21}
Matthias Hofmann, James~B. Kennedy, Delio Mugnolo, and Marvin Pl\"{u}mer.
\newblock Asymptotics and estimates for spectral minimal partitions of metric
  graphs.
\newblock {\em Integral Equations Operator Theory}, 93(3):Paper No. 26, 36,
  2021.

\bibitem{KKLM21}
James~B. Kennedy, Pavel Kurasov, Corentin L\'{e}na, and Delio Mugnolo.
\newblock A theory of spectral partitions of metric graphs.
\newblock {\em Calc. Var. Partial Differential Equations}, 60(2):Paper No. 61,
  63, 2021.

\bibitem{lena2015courant}
Corentin L{\'e}na.
\newblock Courant-sharp eigenvalues of a two-dimensional torus.
\newblock {\em Comptes Rendus Mathematique}, 353(6):535--539, 2015.

\bibitem{M00}
William McLean.
\newblock {\em Strongly elliptic systems and boundary integral equations}.
\newblock Cambridge University Press, Cambridge, 2000.

\bibitem{P56}
{\AA}ke Pleijel.
\newblock Remarks on {C}ourant's nodal line theorem.
\newblock {\em Comm. Pure Appl. Math.}, 9:543--550, 1956.

\bibitem{U76}
K.~Uhlenbeck.
\newblock Generic properties of eigenfunctions.
\newblock {\em Amer. J. Math.}, 98(4):1059--1078, 1976.

\end{thebibliography}

\end{document}